\documentclass[a4paper,12pt]{article}
\textwidth6.5in\hoffset-.75in
\usepackage{latexsym, amssymb, amsfonts,amsmath}
\usepackage{amssymb,latexsym}
\usepackage{amsfonts}
\usepackage{amsmath,amsthm,graphicx}
\newcommand{\be}{\begin{equation}}
\newcommand{\ee}{\end{equation}}
\newcommand{\bea}{\begin{eqnarray}}
\newcommand{\eea}{\end{eqnarray}}
\newcommand{\bean}{\begin{eqnarray*}}
\newcommand{\eean}{\end{eqnarray*}}
\newcommand{\brray}{\begin{array}}
\newcommand{\erray}{\end{array}}
\newcommand{\ben}{\begin{equation}{nonumber}}
\newcommand{\een}{\end{equation}{nonumber}}

\newtheorem{dfn}{Definition}[section]
\newtheorem{thm}[dfn]{Theorem}
\newtheorem{lmma}[dfn]{Lemma}

\newtheorem{ppsn}[dfn]{Proposition}
\newtheorem{crlre}[dfn]{Corollary}
\newtheorem{xmpl}[dfn]{Example}
\newtheorem{rmrk}[dfn]{Remark}
\newcommand{\bdfn}{\begin{dfn}}
\newcommand{\bthm}{\begin{thm}}

\numberwithin{equation}{section}

\newcommand{\blr}{\begin{list}{$($\roman{cnt1}$)$} {\usecounter{cnt1}
        \setlength{\topsep}{0pt} \setlength{\itemsep}{0pt}}}
\newcommand{\bla}{\begin{list}{$($\alph{cnt2}$)$} {\usecounter{cnt2}
       \setlength{\topsep}{0pt} \setlength{\itemsep}{0pt}}}
\newcommand{\bln}{\begin{list}{$($\arabic{cnt3}$)$} {\usecounter{cnt3}
                \setlength{\topsep}{0pt} \setlength{\itemsep}{0pt}}}
\newcommand{\el}{\end{list}}

\newcommand{\blmma}{\begin{lmma}}
\newcommand{\bppsn}{\begin{ppsn}}
\newcommand{\bcrlre}{\begin{crlre}}
\newcommand{\bxmpl}{\begin{xmpl}}
\newcommand{\brmrk}{\begin{rmrk}}
\newcommand{\edfn}{\end{dfn}}
\newcommand{\ethm}{\end{thm}}
\newcommand{\elmma}{\end{lmma}}

\newcommand{\eppsn}{\end{ppsn}}
\newcommand{\ecrlre}{\end{crlre}}
\newcommand{\exmpl}{\end{xmpl}}
\newcommand{\ermrk}{\end{rmrk}}

\def\a*{{\cal A}_{h,*}}
\def\B{{\cal B}(h)}
\def\B1{{\cal B}_1(h)}
\def\b{{\cal B}^{\rm s.a.}(h)}
\def\b1{{\cal B}^{\rm s.a.}_1(h)}

\newcommand{\suml}{\sum \limits}
\newcommand{\itt}{\int \limits}



\begin{document}

\begin{center}
\Large{\bf{Third Order Trace Formula}}\\
\vspace{0.15in}
{\large Arup Chattopadhyay {\footnote {J.N.Centre for Advanced Scientific Research, Bangalore-560064, INDIA; ~~arup@jncasr.ac.in}} }
{\large and ~~~Kalyan B. Sinha {\footnote { J.N.Centre for Advanced Scientific Research and Indian Institute of Science, Bangalore-560064, INDIA; kbs@jncasr.ac.in}}}\\
\end{center}
\vspace{0.15in}
\begin{abstract}
In \cite{DykemmaSkripkka}, Dykema and Skripka showed the existence of higher order spectral shift functions when the unperturbed self-adjoint operator is
bounded and the perturbations is Hilbert-Schmidt. In this article, we give a different  proof for the existence of spectral shift function for the third
order when the unperturbed operator is self-adjoint (bounded or unbounded, but bounded below). 
\end{abstract}

{{\bf{Keywords.}}} Trace formula, spectral shift function , perturbations of self-adjoint 

operators.

\section{Introduction.}\label{sec: intro}
\emph{ \textbf{Notations:}} Here, $\mathcal{H}$ will denote the separable Hilbert space we work in; $\mathcal{B}(\mathcal{H})$, $\mathcal{B}_p(\mathcal{H})$ $[p\geq 1]$,
the set of bounded, Schatten p- class operators in $\mathcal{H}$ respectively with  $\|.\|, \|.\|_p$ as 
the associated norms. In particular $\mathcal{B}_{1}(\mathcal{H})$ and $\mathcal{B}_{2}(\mathcal{H})$ are known as the set of trace class and Hilbert-Schmidt class operators in
$\mathcal{H}$. Let $A$ be a  self-adjoint operator in $\mathcal{H}$ with $\sigma(A)$ as the spectra and $E_{A}(\lambda)$ the spectral family. 
The symbols $\textup{Dom}(A),$ $\textup{Ker}(A),$ $\textup{Ran}(A)$ and $\textup{Tr} A$ denote the domain, kernel, range and trace of the operator $A$ respectively.

Let $A$ (possibly unbounded) and $V$ be two self-adjoint operators in $\mathcal{H}$ such that $V \in \mathcal{B}_1(\mathcal{H}),$
then Krein \cite{mgkrein} proved that there exists a unique real-valued $L^1(\mathbb{R})$- function $\xi$ 
with support in the interval $[a,b]$ ( where $a=min\{\inf \sigma(A+V), \inf \sigma(A)\}$ and $b=max\{\sup \sigma(A+V), $ $\sup \sigma(A)\}$ ) such that 
\begin{equation}\label{eq: kreinintro}
 \textup{Tr}\left[\phi\left(A+V\right)-\phi\left(A\right)\right] = \int_{a}^{b} \phi^{\prime}(\lambda)\xi(\lambda)d\lambda,
\end{equation}
for a large class of functions $\phi$ . The function $\xi$ is known as Krein's spectral shift function and the relation \eqref{eq: kreinintro} is called
Krein's trace formula. In 1985, Voiculescu approached the trace formula \eqref{eq: kreinintro} from a different
direction. Later Voiculescu \cite{Voiculescu}, and Sinha and Mohapatra (\cite{Sinhamohapatra}, \cite{smunitary}) proved that
\begin{equation}
 \textup{Tr}\left[\phi \left(A+V\right)-\phi \left(A\right)\right] = \lim_{n\longrightarrow \infty} \textup{Tr}_n\left[\phi \left((A+V)_n\right)-\phi \left(A_{n}\right)\right] = \int \phi'(\lambda) \xi(\lambda) d\lambda,
\end{equation}
by adapting the Weyl-von Neumann's theorem (where $\phi(.)$ is a suitable function and 

\hspace{-0.8cm} $(A+V)_n,A_{n}$ are finite dimensional approximations of $(A+V)$ and $A$ respectively and
$\textup{Tr}_n$ is the associated finite dimensional trace).In \cite{kopleinko}, Koplienko considers instead $ \phi(A+V) - \phi(A) - D^{(1)}\phi(A) (V),$
where $D^{(1)}\phi(A)$ denotes the first order Frechet derivative of $\phi$ at $A$ \cite{Bhatia} and finds a trace formula for this expression. 
If $V\in \mathcal{B}_2(\mathcal{H}),$ then Koplienko's formula asserts that there exists a unique function $\eta \in L^1(\mathbb{R})$ such that
\begin{equation}\label{eq: intkopeq}
 \textup{Tr}\{\phi(A+V) - \phi(A) - D^{(1)}\phi(A) (V)\} = \int_{-\infty}^{\infty} \phi''(\lambda)\eta(\lambda)d\lambda
\end{equation}
for rational functions $\phi$ with poles off $\mathbb{R}$. In \cite{chttosinha}, Koplienko trace formula was derived using finite dimensional approximation method,
while Dykema and Skripka \cite{DykemmaSkripkka} obtained the formula \eqref{eq: intkopeq} in the semi-finite von Neumann algebra setting and also studied
the existence of higher order spectral shift function. In $\left( \cite{DykemmaSkripkka}, \textup{Theorem 5.1} \right)$, Dykema and Skripka showed that 
for a self-adjoint operator $A$ (possibly unbounded) and a self-adjoint operator $V\in \mathcal{B}_2(\mathcal{H})$, the following assertions hold:
\vspace{0.2in}

\hspace{-0.8cm} \emph{ (i) There is a unique finite real-valued measure} $\nu_3$ \emph{on} $\mathbb{R}$ \emph{ such that the trace formula} 
\begin{equation}\label{eq: intfsteq}
\textup{Tr}\{\phi(A+V) - \phi(A) - D^{(1)}\phi(A)  (V) - \frac{1}{2}D^{(2)}\phi(A) (V,V)\} = \int_{-\infty}^{\infty} \phi'''(\lambda)d\nu_3(\lambda),
\end{equation}
\emph{ holds for suitable functions $\phi,$ where  $D^{(2)}\phi(A)$ is the second order Frechet derivative of $\phi$ at } $A$ \cite{Bhatia}.
\emph{The total variation of $\nu_3$ is bounded by} $\frac{1}{3!}\|V\|_2^3$.
\vspace{0.2in}

\hspace{-0.8cm} \emph{ (ii)  If, in addition, $A$ is bounded, then $\nu_3$ is absolutely continuous.}
\vspace{0.2in}
 
It is noted that there has been a more recent preprint by Potapov , Skripka  and Sukochev \cite{potapovsukochevskripka} in which similar and further
results have been announced. 

This paper is organized as follows. In section 2 , we establish the formula \eqref{eq: intfsteq} for bounded self-adjoint case and 
section 3 is devoted to the unbounded self-adjoint case.

\section{Bounded Case}

The next three lemmas are preparatory for the proof of the main theorem of this section, theorem \ref{thm: mainbddthm}.
\begin{lmma}\label{lmma: combinatory}
Let, for a given $n\in \mathbb{N}$, $\{a_k\}_{k=0}^{n-1}$ be a sequence of complex numbers such that $a_{n-k-1} = a_k$. Then
$$ \sum_{j=0}^{n-1}~\sum_{k=0}^{n-j-1}a_k + \sum_{j=1}^n~\sum_{k=0}^{j-1}a_k = (n+1) \sum_{k=0}^{n-1}a_k.$$
\end{lmma}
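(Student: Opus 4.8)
The plan is to evaluate each of the two double sums separately by interchanging the order of summation --- equivalently, by counting, for each fixed index $k$, in how many of the inner sums the term $a_k$ appears --- and then to invoke the symmetry hypothesis $a_{n-k-1}=a_k$ through a single reindexing. No analytic input is needed; the whole argument is elementary index bookkeeping, with the hypothesis entering at exactly one place.

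First I would show that \emph{both} double sums equal the same weighted sum $\sum_{k=0}^{n-1}(n-k)a_k$. In the first double sum, a fixed $a_k$ occurs in the inner sum exactly when $k\le n-j-1$, that is, for $j\in\{0,1,\dots,n-k-1\}$, giving $n-k$ occurrences; hence $\sum_{j=0}^{n-1}\sum_{k=0}^{n-j-1}a_k=\sum_{k=0}^{n-1}(n-k)a_k$. In the second double sum, $a_k$ occurs whenever $k\le j-1$, that is, for $j\in\{k+1,\dots,n\}$, again $n-k$ occurrences; hence $\sum_{j=1}^{n}\sum_{k=0}^{j-1}a_k=\sum_{k=0}^{n-1}(n-k)a_k$ as well. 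Consequently the left-hand side of the asserted identity equals $2\sum_{k=0}^{n-1}(n-k)a_k$.

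Next I would bring in the symmetry. Reindexing by $k\mapsto n-1-k$ and using $a_{n-1-k}=a_k$ turns the descending weights into the complementary ascending ones: $\sum_{k=0}^{n-1}(n-k)a_k=\sum_{k=0}^{n-1}(k+1)a_{n-1-k}=\sum_{k=0}^{n-1}(k+1)a_k$. Adding these two expressions for one and the same quantity gives $2\sum_{k=0}^{n-1}(n-k)a_k=\sum_{k=0}^{n-1}\left[(n-k)+(k+1)\right]a_k=(n+1)\sum_{k=0}^{n-1}a_k$, which is exactly the claimed formula.

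The argument presents no real obstacle: the only substantive step is the reindexing, which is precisely where the hypothesis $a_{n-k-1}=a_k$ is used, and its effect is to pair each descending weight $n-k$ with the ascending weight $k+1$ so that together they collapse to the constant $n+1$. The single point demanding care is the verification of the ranges of $j$ that contribute each $a_k$ in the two double sums; once those counts are correct, the identity follows immediately.
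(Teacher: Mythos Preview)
Your proof is correct. Both double sums are indeed equal to $\sum_{k=0}^{n-1}(n-k)a_k$, and the symmetry hypothesis then converts the descending weight $n-k$ into the ascending weight $k+1$, so that doubling gives the constant weight $n+1$.

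Your route differs from the paper's, though both are elementary index manipulations. The paper does not interchange the order of summation; instead it applies the substitution $k\mapsto n-1-k$ together with the hypothesis \emph{inside} the first double sum to rewrite it as $\sum_{j=0}^{n-1}\sum_{k=j}^{n-1}a_k$, and shifts $j\mapsto j-1$ in the second to obtain $\sum_{j=0}^{n-1}\sum_{k=0}^{j}a_k$. For each fixed $j$ the two inner sums are then a ``tail'' and a ``head'' whose union is all of $\{0,\dots,n-1\}$ with the single overlap at $k=j$, yielding $a_j+\sum_{k=0}^{n-1}a_k$; summing over $j$ gives $(n+1)\sum_k a_k$. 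Your approach has the advantage of computing each double sum explicitly as a single weighted sum before combining, which makes the role of the symmetry hypothesis especially transparent (it appears only in the final weight-reflection step). The paper's approach uses the hypothesis earlier, to reshape one of the double sums, and then finishes by a complementarity observation rather than a weight calculation. Both arguments are of the same length and difficulty.
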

\begin{proof}
By changing the indices of summation and using the fact  $a_{n-k-1} = a_k$, we get that
\begin{equation*}
\begin{split}
\sum_{j=0}^{n-1}~\sum_{k=0}^{n-j-1}a_k + \sum_{j=1}^n~\sum_{k=0}^{j-1}a_k = \sum_{j=0}^{n-1}~\sum_{k=j}^{n-1}a_{n-k-1} + \sum_{j=0}^{n-1}~\sum_{k=0}^{j}a_k = \sum_{j=0}^{n-1}~\sum_{k=j}^{n-1}a_{k} + \sum_{j=0}^{n-1}~\sum_{k=0}^{j}a_k \\
& \hspace{-10.8cm} = \sum_{j=0}^{n-1}a_{j} + \sum_{j=0}^{n-1}~\sum_{k=0}^{n-1}a_k = \sum_{j=0}^{n-1}a_{j} + n\sum_{k=0}^{n-1}a_{k} = (n+1) \sum_{k=0}^{n-1}a_{k}.
\end{split}
\end{equation*}
~~~~~~~~~~~~~~~~~~~~~~~~~~~~~~~~~~~~~~~~~~~~~~~~~~~~~~~~~~~~~~~~~~~~~~~~~~~~~~~~~~~~~~~~~~~~~~~~~~~~~~~~~~~~~~~~~~~~~~~~~~~~~~~~~~~~~~~~~~~~~\end{proof}
\begin{lmma}\label{lmma: bddexplmma}
Let $A$ and $V$ be two bounded self-adjoint operators in an infinite dimensional Hilbert space $\mathcal{H}$ such that $V\in \mathcal{B}_3(\mathcal{H})$. Let
$p(\lambda) = \lambda^r ~( r\geq 0)$.Then
\begin{equation}\label{eq: bddlmmamaineq}
\begin{split}
\textup{Tr}\left[(A+V)^r - A^r -D^{(1)}(A^r)(V) - \frac{1}{2}D^{(2)}(A^r)(V,V)\right] \\
& \hspace{-6cm} = r\sum_{k=0}^{r-2} \int_0^1 ds\int_0^s d\tau ~~\textup{Tr}\left[VA_{\tau}^{r-k-2}VA_{\tau}^k-VA^{r-k-2}VA^k\right],
\end{split}
\end{equation}
where $A_{\tau} = A + \tau V$ and $0\leq \tau \leq 1.$
\end{lmma}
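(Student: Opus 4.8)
The plan is to recognize the left-hand side of \eqref{eq: bddlmmamaineq} as the third-order Taylor remainder of the scalar map $\tau\mapsto \textup{Tr}[(A+\tau V)^r]$ and to recover it by integrating its second derivative twice. The only delicate point is that in infinite dimensions the individual operators $(A+\tau V)^r$, $A^r$, $D^{(1)}(A^r)(V)$ and $D^{(2)}(A^r)(V,V)$ are bounded but \emph{not} trace class, so the whole argument must be organized in terms of trace-class \emph{differences}, and at no stage may one take the trace of a non–trace-class operator.

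First I would record the noncommutative binomial expansion
\[
(A+\tau V)^r=\sum_{m=0}^{r}\tau^{m}\sum_{i_0+\cdots+i_m=r-m}A^{i_0}VA^{i_1}V\cdots VA^{i_m}\qquad(i_j\geq 0,\ m\text{ factors of }V),
\]
and identify the $m=1$ and $m=2$ terms with $D^{(1)}(A^r)(V)$ and $\tfrac12 D^{(2)}(A^r)(V,V)$ respectively (cf. \cite{Bhatia}). Consequently the operator
\[
W(\tau):=(A+\tau V)^r-A^r-\tau D^{(1)}(A^r)(V)-\tfrac{\tau^2}{2}D^{(2)}(A^r)(V,V)=\sum_{m=3}^{r}\tau^{m}\sum_{i_0+\cdots+i_m=r-m}A^{i_0}VA^{i_1}\cdots VA^{i_m}
\]
is a finite sum of monomials each carrying at least three factors of $V\in\mathcal{B}_3(\mathcal{H})$ interspersed with bounded powers of $A$, hence lies in $\mathcal{B}_1(\mathcal{H})$ by the Schatten–H\"older inequality. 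Thus $\psi(\tau):=\textup{Tr}[W(\tau)]$ is a genuine polynomial in $\tau$ with $\psi(0)=\psi'(0)=\psi''(0)=0$, and the left-hand side of \eqref{eq: bddlmmamaineq} equals $\psi(1)$.

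Next I would differentiate under the trace, which is legitimate since $W(\tau)=\sum_{m\geq 3}\tau^{m}C_m$ with fixed trace-class coefficients $C_m$. This gives $\psi''(\tau)=\textup{Tr}[W''(\tau)]$ with $W''(\tau)=D^{(2)}((A_\tau)^r)(V,V)-D^{(2)}(A^r)(V,V)$, where the two non–trace-class summands are always kept together as a trace-class difference. Writing $D^{(2)}((A_\tau)^r)(V,V)=2\sum_{i_0+i_1+i_2=r-2}A_\tau^{i_0}VA_\tau^{i_1}VA_\tau^{i_2}$ and using cyclicity of the trace, $\textup{Tr}[A_\tau^{i_0}VA_\tau^{i_1}VA_\tau^{i_2}]=\textup{Tr}[VA_\tau^{i_1}VA_\tau^{i_0+i_2}]$, the triple sum collapses, after collecting terms with a fixed middle gap, into a single-index sum $\sum_{k}(r-1-k)a_k$ with $a_k=\textup{Tr}[VA_\tau^{k}VA_\tau^{r-2-k}]$. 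Here I would invoke Lemma \ref{lmma: combinatory} with $n=r-1$: the hypothesis $a_{r-2-k}=a_k$ holds by cyclicity, and since each of its two double sums evaluates to $\sum_k(r-1-k)a_k$, the lemma yields $\sum_k(r-1-k)a_k=\tfrac{r}{2}\sum_k a_k$. This converts the multiplicity-weighted triple sum into the symmetric form and produces
\[
\psi''(\tau)=r\sum_{k=0}^{r-2}\textup{Tr}\!\left[VA_\tau^{r-k-2}VA_\tau^{k}-VA^{r-k-2}VA^{k}\right].
\]

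Finally, since $\psi(0)=\psi'(0)=0$, two applications of the fundamental theorem of calculus give $\psi(1)=\int_0^1 ds\int_0^s\psi''(\tau)\,d\tau$; substituting the displayed formula for $\psi''(\tau)$ and interchanging the finite sum with the integrals yields exactly the right-hand side of \eqref{eq: bddlmmamaineq}. I expect the main obstacle to be bookkeeping discipline rather than any single estimate: the definition of $W(\tau)$, the second derivative $W''(\tau)$, and the final integrand must each be arranged as a difference lying in $\mathcal{B}_1(\mathcal{H})$ before any trace is taken, and the combinatorial collapse of the trace of the second Fr\'echet derivative, supplied by Lemma \ref{lmma: combinatory}, is the one genuinely non-routine ingredient.
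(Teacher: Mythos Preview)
Your proposal is correct and rests on the same two key ingredients as the paper's proof---cyclicity of the trace and Lemma~\ref{lmma: combinatory}---but is organized differently. The paper first derives an \emph{operator-level} identity \eqref{eq: frecetdiff} for $(A+V)^r - A^r - D^{(1)}(A^r)(V) - \tfrac12 D^{(2)}(A^r)(V,V)$ by integrating $\frac{d}{d\tau}\bigl(A_\tau^{r-j-1}VA_\tau^{j}\bigr)$ twice, splits the integrand into two families $I_1,I_2$, telescopes each into three-$V$ products to exhibit trace-class membership, and only then applies cyclicity and Lemma~\ref{lmma: combinatory} to $\textup{Tr}(I_1)+\textup{Tr}(I_2)$. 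Your route is more economical: the noncommutative binomial expansion shows at once that $W(\tau)$ is a polynomial in $\tau$ with $\mathcal{B}_1(\mathcal{H})$-coefficients, so Taylor's theorem for the scalar polynomial $\psi(\tau)=\textup{Tr}\,W(\tau)$ reduces the whole problem to computing $\psi''(\tau)$, and the double integral $\int_0^1 ds\int_0^s d\tau$ appears for free.

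The one place where your write-up is looser than the paper's is the cyclicity step: the displayed identity $\textup{Tr}[A_\tau^{i_0}VA_\tau^{i_1}VA_\tau^{i_2}]=\textup{Tr}[VA_\tau^{i_1}VA_\tau^{i_0+i_2}]$ is written for a single $\mathcal{B}_{3/2}(\mathcal{H})$-operator, on which $\textup{Tr}$ is not defined. It must be stated for the \emph{difference} with the corresponding $A$-term, after a telescoping of the form $(A_\tau^{i_0}-A^{i_0})V\cdots + A^{i_0}V(A_\tau^{i_1}-A^{i_1})\cdots + \cdots$, exactly as in the paper's treatment of $I_1$; this is precisely the bookkeeping discipline you flag in your final paragraph, so be sure to carry it through that step as well.
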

\begin{proof}
 For $X\in \mathcal{B}(\mathcal{H})$, ~~$p(A+X) - p(A) = \suml_{j=0}^{r-1} (A+X)^{r-j-1} X A^j$ and hence
$$\left\|p(A+X)-p(A)-\sum_{j=0}^{r-1} A^{r-j-1}XA^j\right\|\leq \sum_{j=0}^{r-2}~\sum_{k=0}^{r-j-2} \|A+X\|^{r-j-k-2}\|X\|\|A\|^k\|X\|\|A\|^j,$$ 
proving that $D^{(1)}(A^r)(X) = \suml_{j=0}^{r-1} A^{r-j-1}XA^j$.
\vspace{0.1in}

Again for $X,Y\in \mathcal{B}(\mathcal{H})$,
\begin{equation*}
\begin{split}
D^{(1)}((A+X)^r)(Y) -D^{(1)}(A^r)(Y)\\
& \hspace{-5cm} = \sum_{j=0}^{r-1} (A+X)^{r-j-1}Y(A+X)^j - \sum_{j=0}^{r-1} A^{r-j-1}YA^j\\
& \hspace{-5cm} = \sum_{j=0}^{r-1} \left[(A+X)^{r-j-1}-A^{r-j-1}\right]Y(A+X)^j + \sum_{j=0}^{r-1} A^{r-j-1}Y\left[(A+X)^j-A^j\right]\\
& \hspace{-5cm} = \sum_{j=0}^{r-2}~\sum_{k=0}^{r-j-2} (A+X)^{r-j-k-2}XA^kY(A+X)^j + \sum_{j=1}^{r-1}~\sum_{k=0}^{j-1} A^{r-j-1}Y(A+X)^kXA^{j-k-1},
\end{split}
\end{equation*}
leading to the estimate
\begin{equation*}
\begin{split}
\|D^{(1)}((A+X)^r)(Y) -D^{(1)}(A^r)(Y) \\
& \hspace{-5cm} - \left( \sum_{j=0}^{r-2}~\sum_{k=0}^{r-j-2} A^{r-j-k-2}XA^kYA^j + \sum_{j=1}^{r-1}~\sum_{k=0}^{j-1} A^{r-j-1}YA^kXA^{j-k-1}\right)\|= \bigcirc (\|X\|^2)
\end{split}
\end{equation*}
for $\|X\|\leq 1$, proving that
\begin{equation}\label{eq: secondfrechet} 
D^{(2)}(A^r)(X,Y) = \sum_{j=0}^{r-2}~\sum_{k=0}^{r-j-2} A^{r-j-k-2}XA^kYA^j + \sum_{j=1}^{r-1}~\sum_{k=0}^{j-1} A^{r-j-1}YA^kXA^{j-k-1}.
\end{equation}
Recall that $A_s=A+sV\in \mathcal{B}_{s.a.}(\mathcal{H}) ~~(0\leq s\leq 1),$ and a similar calculation shows that the map
$[0,1]\ni s\longmapsto A_s^r$ is continuously differentiable in norm-topology and 
$$\frac{d}{ds}(A_s^r) = \sum_{j=0}^{r-1} A_s^{r-j-1}VA_s^j = \sum_{j=0}^{r-1} A_s^jVA_s^{r-j-1}.$$
Hence
\begin{equation*}
\begin{split}
(A+V)^r - A^r - D^{(1)}(A^r)(V) = \int_0^1 ds \frac{d}{ds}(A_s^r) - D^{(1)}(A^r)(V)\\
& \hspace{-10cm} = \int_0^1ds \sum_{j=0}^{r-1}\left(A_s^{r-j-1}VA_s^j - A^{r-j-1}VA^j\right) = \int_0^1ds \sum_{j=0}^{r-1}~\int_0^s d\tau \frac{d}{d\tau}\left(A_{\tau}^{r-j-1}VA_{\tau}^j\right),
\end{split}
\end{equation*}
which by an application of Leibnitz's rule reduces to
$$\int_0^1ds\int_0^sd\tau\left(\sum_{j=0}^{r-2}~\sum_{k=0}^{r-j-2}A_{\tau}^{r-j-k-2}VA_{\tau}^kVA_{\tau}^j + \sum_{j=1}^{r-1}~\sum_{k=0}^{j-1}A_{\tau}^{r-j-1}VA_{\tau}^kVA_{\tau}^{j-k-1}\right)$$
and using \eqref{eq: secondfrechet}, we get
\begin{equation}\label{eq: frecetdiff}
\begin{split}
  (A+V)^r - A^r -D^{(1)}(A^r)(V) - \frac{1}{2}D^{(2)}(A^r)(V,V)\\
& \hspace{-8cm} = \int_0^1ds\int_0^sd\tau\{~\sum_{j=0}^{r-2}~\sum_{k=0}^{r-j-2}A_{\tau}^{r-j-k-2}VA_{\tau}^kVA_{\tau}^j + \sum_{j=1}^{r-1}~\sum_{k=0}^{j-1}A_{\tau}^{r-j-1}VA_{\tau}^kVA_{\tau}^{j-k-1}\\
& \hspace{-5cm} - \sum_{j=0}^{r-2}~\sum_{k=0}^{r-j-2}A^{r-j-k-2}VA^kVA^j - \sum_{j=1}^{r-1}~\sum_{k=0}^{j-1}A^{r-j-1}VA^kVA^{j-k-1}~\}.
\end{split}
\end{equation}
Let us denote the sum of the first and third term inside the integral in \eqref{eq: frecetdiff} to be
\begin{equation*}
\begin{split}
\textup{I}_1 \equiv \sum_{j=0}^{r-2}~\sum_{k=0}^{r-j-2}\left[A_{\tau}^{r-j-k-2}VA_{\tau}^kVA_{\tau}^j - A^{r-j-k-2}VA^kVA^j\right]\\ 
& \hspace{-9.5cm} = \sum_{j=0}^{r-2}~\sum_{k=0}^{r-j-2}\left[A_{\tau}^{r-j-k-2} - A^{r-j-k-2}\right]VA_{\tau}^kVA_{\tau}^j + \sum_{j=0}^{r-2}~\sum_{k=0}^{r-j-2} A^{r-j-k-2}V\left[A_{\tau}^k - A^k\right]VA_{\tau}^j\\
& \hspace{-8cm} + \sum_{j=0}^{r-2}~\sum_{k=0}^{r-j-2} A^{r-j-k-2}VA^kV\left[A_{\tau}^j - A^j\right] \in \mathcal{B}_1(\mathcal{H}),
\end{split}
\end{equation*}
since $V\in \mathcal{B}_3(\mathcal{H})$ and $A_{\tau}^k-A^k\in \mathcal{B}_3(\mathcal{H}) ~~\forall \tau \in [0,1]$ and $k\in \{0,1,2,3,......\}$. Thus by
the cyclicity of trace , we have that 
$$\textup{Tr}(\textup{I}_1) = \sum_{j=0}^{r-2}~\sum_{k=0}^{r-j-2}\textup{Tr}\left[A_{\tau}^{r-k-2}VA_{\tau}^kV - A^{r-k-2}VA^kV\right].$$
Again if we set the sum of the second and fourth term inside the integral in \eqref{eq: frecetdiff} to be 
$$\textup{I}_2\equiv \sum_{j=1}^{r-1}~\sum_{k=0}^{j-1}\left[A_{\tau}^{r-j-1}VA_{\tau}^kVA_{\tau}^{j-k-1} - A^{r-j-1}VA^kVA^{j-k-1}\right]\in \mathcal{B}_1(\mathcal{H}),$$
 a similar calculation shows that
$$\textup{Tr}(\textup{I}_2) = \sum_{j=1}^{r-1}~\sum_{k=0}^{j-1}~\textup{Tr}\left[A_{\tau}^{r-k-2}VA_{\tau}^kV - A^{r-k-2}VA^kV\right].$$
By applying Lemma \ref{lmma: combinatory} with $n=r-1$ and $a_k = \textup{Tr}\left[A_{\tau}^{r-k-2}VA_{\tau}^kV - A^{r-k-2}VA^kV\right]$ and using the 
cyclicity of trace, we conclude that
\begin{equation}\label{eq: i1i2eu}
\begin{split}
\textup{Tr}(\textup{I}_1) + \textup{Tr}(\textup{I}_2) = r \sum_{k=0}^{r-2}~\textup{Tr}\left[A_{\tau}^{r-k-2}VA_{\tau}^kV - A^{r-k-2}VA^kV\right]\\
& \hspace{-7.6cm} = r \sum_{k=0}^{r-2}~\textup{Tr}\left[VA_{\tau}^{r-k-2}VA_{\tau}^k - VA^{r-k-2}VA^k\right].
\end{split}
\end{equation}
Hence combining \eqref{eq: frecetdiff} and \eqref{eq: i1i2eu}, we get the required expression \eqref{eq: bddlmmamaineq}.
~~~~~~~\end{proof}

\begin{lmma}\label{lmma: decomp}
Let $B$ be a bounded operator in an infinite dimensional Hilbert space $\mathcal{H}$(i.e. $B\in \mathcal{B}(\mathcal{H})$). Define 
$\mathcal{M}_{B}:\mathcal{B}_2(\mathcal{H})\longmapsto \mathcal{B}_2(\mathcal{H})$ ( looking upon $\mathcal{B}_2(\mathcal{H}) \equiv \widetilde{\mathcal{H}}$
as a Hilbert space with inner product given by trace i.e.$\langle X,Y\rangle _2 = \textup{Tr}\{X^*Y\}$ for $X,Y\in\mathcal{B}_2(\mathcal{H})$) by $\mathcal{M}_{B}(X) = BX-XB ~;~ X\in \mathcal{B}_2(\mathcal{H})$. Then
\vspace{0.1in}

(i) $\mathcal{M}_{B}$ is a bounded operator on $\widetilde{\mathcal{H}}$ (i.e.~~$\mathcal{M}_{B}\in \mathcal{B}(\widetilde{\mathcal{H}}))$ with $\mathcal{M}^*_{B} = \mathcal{M}_{B^*}$.
\vspace{0.1in}

(ii) $\textup{Ker}\left(\mathcal{M}_{B}\right)$ and its orthogonal complement $ \overline{\textup{Ran}\left(\mathcal{M}_{B^*}\right)}$ in  $\widetilde{\mathcal{H}}$ are left invariant 
by left and right multiplication by $B^n$ and $(B^*)^n ~(n=1,2,3,...)$ respectively.
\vspace{0.1in}

(iii) $\widetilde{\mathcal{H}} = \textup{Ker}\left(\mathcal{M}_{B}\right) \bigoplus \overline{\textup{Ran}\left(\mathcal{M}_{B^*}\right)}$ ; $\mathcal{B}_2(\mathcal{H})\ni X = X_1 \oplus X_2 $, where $X_1\in \textup{Ker}\left(\mathcal{M}_{B}\right)$
and $X_2\in \overline{\textup{Ran}\left(\mathcal{M}_{B^*}\right)}$.
\vspace{0.1in}

(iv) If $\textup{Ker}\left(\mathcal{M}_{B}\right) = \textup{Ker}\left(\mathcal{M}_{B^*}\right)$, then $\textup{Ker}\left(\mathcal{M}_{B}\right)$ and $ \overline{\textup{Ran}\left(\mathcal{M}_{B}\right)}$
are generated by their self-adjoint elements and for $X\in \widetilde{\mathcal{H}}$, we have $\left(X^*\right)_1 = X_1^*$ and $\left(X^*\right)_2 = X_2^*$, where
$X = X_1 \oplus X_2$ and $X^* = \left(X^*\right)_1 \oplus \left(X^*\right)_2$ are the respective decompositions of $X$ and $X^*$ in $\widetilde{\mathcal{H}}$.
\vspace{0.1in}

(v) If $\textup{Ker}\left(\mathcal{M}_{B}\right) = \textup{Ker}\left(\mathcal{M}_{B^*}\right)$, then for $X = X^*\in\widetilde{\mathcal{H}}$, ~$X = X_1 \oplus X_2$
with $X_1$ and $X_2$  both self-adjoint.
\vspace{0.1in}

(vi) (a) For $B = B^*\in\mathcal{B}(\mathcal{H}),$ $\mathcal{M}_{B}$ is self-adjoint in $\widetilde{\mathcal{H}}$ and for $X = X^*\in\widetilde{\mathcal{H}}$, we have $X_1 = X_1^*$ and $X_2=X_2^*$.
\vspace{0.1in}

(b) For $B = (A+\textup{i})^{-1}$ ( where $A$ is an unbounded self-adjoint operator in $\mathcal{H}$), $\mathcal{M}_{B}$ is bounded normal in $\widetilde{\mathcal{H}}$ and for $X = X^*\in\widetilde{\mathcal{H}}$, we have $X_1 = X_1^*$ and $X_2=X_2^*$, where $X=X_1 \oplus X_2$ is 
the decomposition of $X$ in $\widetilde{\mathcal{H}}$.
\vspace{0.1in}

(vii) (a) Let $[0,1] \ni \tau \longrightarrow A_{\tau} \in \mathcal{B}_{s.a}(\mathcal{H})$( set of bounded self-adjoint operators in $\mathcal{H}$) be
continuous in operator norm , and let $\widetilde{\mathcal{H}} \ni X\equiv X_{1\tau} \oplus X_{2\tau}$ be the self-adjoint decomposition with respect to $A_{\tau}.$
Then $\tau \longrightarrow X_{1\tau} , ~X_{2\tau} \in \widetilde{\mathcal{H}}$ are continuous.
\vspace{0.1in}

(b) Let $\{A_{\tau}\}_{\tau \in [0,1]}$ be a family of unbounded self-adjoint operators in $\mathcal{H}$ such that $[0,1] \ni \tau \longrightarrow \left(A_{\tau} + \textup{i}\right)^{-1}$
is continuous in operator norm. Then the conclusions of $(vii) (a)$ is valid for the decomposition of $\widetilde{\mathcal{H}}$ with respect to
$B_{\tau} \equiv \left(A_{\tau} + \textup{i}\right)^{-1}$.
\end{lmma}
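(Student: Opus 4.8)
The plan is to read (i)--(vi) as formal Hilbert-space facts about the commutator map $\mathcal{M}_B$ on $\widetilde{\mathcal{H}}=\mathcal{B}_2(\mathcal{H})$ and the involution $X\mapsto X^*$, reserving the real work for the continuity statement (vii). For (i), boundedness is immediate from $\|\mathcal{M}_B X\|_2\le 2\|B\|\,\|X\|_2$, while cyclicity of the trace gives $\langle \mathcal{M}_B X,Y\rangle_2=\textup{Tr}(X^*(B^*Y-YB^*))=\langle X,\mathcal{M}_{B^*}Y\rangle_2$, so $\mathcal{M}_B^*=\mathcal{M}_{B^*}$. For (ii), an element of $\textup{Ker}(\mathcal{M}_B)$ is exactly an $X$ commuting with $B$, hence with every $B^n$, and then $B^nX$ and $XB^n$ again commute with $B$; dually, the multiplication maps $L_{B^n}\colon X\mapsto B^nX$ and $R_{B^n}\colon X\mapsto XB^n$ have adjoints $L_{(B^*)^n}$ and $R_{(B^*)^n}$ (again by cyclicity), so the orthogonal complement $\overline{\textup{Ran}(\mathcal{M}_{B^*})}=\textup{Ker}(\mathcal{M}_B)^\perp$ is invariant under left and right multiplication by $(B^*)^n$. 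Statement (iii) is then the standard orthogonal decomposition $\widetilde{\mathcal{H}}=\textup{Ker}(T)\oplus\overline{\textup{Ran}(T^*)}$ for $T=\mathcal{M}_B$, using $T^*=\mathcal{M}_{B^*}$ from (i).

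The mechanism behind (iv)--(vi) is the conjugate-linear involution $J(X)=X^*$, which is anti-unitary for $\langle\cdot,\cdot\rangle_2$ since $\langle X^*,Y^*\rangle_2=\overline{\langle X,Y\rangle_2}$. Taking adjoints in $BX=XB$ shows $J$ carries $\textup{Ker}(\mathcal{M}_B)$ into $\textup{Ker}(\mathcal{M}_{B^*})$; under the hypothesis $\textup{Ker}(\mathcal{M}_B)=\textup{Ker}(\mathcal{M}_{B^*})$ the kernel is $J$-invariant, and anti-unitarity forces its orthogonal complement to be $J$-invariant as well. Writing $X=\tfrac{1}{2}(X+X^*)+i\,\tfrac{1}{2i}(X-X^*)$ exhibits each subspace as generated by its self-adjoint elements, and applying $J$ to $X=X_1\oplus X_2$ together with uniqueness of the decomposition yields $(X^*)_1=X_1^*$ and $(X^*)_2=X_2^*$; this is (iv), and the specialization $X=X^*$ is (v). For (vi)(a), $B=B^*$ makes $\mathcal{M}_B$ self-adjoint by (i), so the hypothesis of (v) holds trivially; for (vi)(b), $B=(A+i)^{-1}$ obeys $BB^*=B^*B=(A^2+1)^{-1}$, and expanding both $\mathcal{M}_B\mathcal{M}_{B^*}$ and $\mathcal{M}_{B^*}\mathcal{M}_B$ shows they agree, so $\mathcal{M}_B$ is normal and hence $\textup{Ker}(\mathcal{M}_B)=\textup{Ker}(\mathcal{M}_{B^*})$, again putting us in the situation of (v).

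The substance of the lemma is (vii). Writing $P_\tau$ for the orthogonal projection onto $\textup{Ker}(\mathcal{M}_{A_\tau})$ (resp. $\textup{Ker}(\mathcal{M}_{B_\tau})$), so that $X_{1\tau}=P_\tau X$, the estimate $\|\mathcal{M}_{A_\tau}-\mathcal{M}_{A_{\tau'}}\|\le 2\|A_\tau-A_{\tau'}\|$ and the assumed norm-continuity of $\tau\mapsto A_\tau$ (resp. $\tau\mapsto(A_\tau+i)^{-1}$) make $\tau\mapsto\mathcal{M}_{A_\tau}$ (resp. $\mathcal{M}_{B_\tau}$) a norm-continuous family of self-adjoint (resp. normal) operators. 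My first attempt would represent $P_\tau$ as a Riesz integral $\tfrac{1}{2\pi i}\oint(z-\mathcal{M}_{A_\tau})^{-1}\,dz$ around $0$, which is norm-continuous where $0$ is isolated; in general I would argue by weak compactness, extracting from $\tau_n\to\tau_0$ a weak limit $X_{1\tau_n}\rightharpoonup Y$, passing $\mathcal{M}_{A_{\tau_n}}X_{1\tau_n}=0$ to the limit via the norm-convergence of $\mathcal{M}_{A_{\tau_n}}$ to obtain $Y\in\textup{Ker}(\mathcal{M}_{A_{\tau_0}})$, identifying $Y=X_{1\tau_0}$ by uniqueness of the decomposition, and finally upgrading weak to norm convergence through $\|X\|_2^2=\|X_{1\tau}\|_2^2+\|X_{2\tau}\|_2^2$. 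I expect the main obstacle to lie precisely here: the kernel projection of a norm-continuous self-adjoint family need not be norm-continuous where eigenvalues of $\mathcal{M}_{A_\tau}$ cross $0$ and $\dim\textup{Ker}(\mathcal{M}_{A_\tau})$ jumps, so the two steps ``the weak limit of $X_{2\tau_n}$ remains orthogonal to the limiting kernel'' and ``weak convergence improves to norm convergence'' are where one must exploit the specific structure $A_\tau=A+\tau V$ with $V\in\mathcal{B}_2(\mathcal{H})$; I would concentrate essentially all of the effort on (vii) and treat (i)--(vi) as preliminaries.
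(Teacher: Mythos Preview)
Your treatment of (i)--(vi) is essentially the paper's: the paper declares (i)--(iii) ``standard'', proves (iv) via the same $J$-invariance mechanism (written out as a computation of $\langle X,Y_1\rangle_2$ for self-adjoint $Y_1$ in the kernel), and derives (v) and (vi)(a) as corollaries. For (vi)(b) the paper takes a slightly different tack---rather than checking normality of $\mathcal{M}_B$ directly, it notes that $X$ commutes with $(A+\textup{i})^{-1}$ iff $X$ commutes with the spectral family of $A$, hence with $(A-\textup{i})^{-1}=B^*$---but your normality computation is equally valid and gives the same conclusion $\textup{Ker}(\mathcal{M}_B)=\textup{Ker}(\mathcal{M}_{B^*})$.

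The gap is exactly where you locate it, in (vii). Your weak-compactness argument breaks at the step ``identifying $Y=X_{1\tau_0}$'': to show that the weak limit of $X_{2\tau_n}$ lands in $\textup{Ker}(\mathcal{M}_{A_{\tau_0}})^\perp$ you need $\langle Z,X_{2\tau_n}\rangle_2\to 0$ for every $Z\in\textup{Ker}(\mathcal{M}_{A_{\tau_0}})$, but you only know $X_{2\tau_n}\perp\textup{Ker}(\mathcal{M}_{A_{\tau_n}})$, and when the kernel enlarges at $\tau_0$ this fails. The paper does not attempt a soft argument here at all: it observes that $\tau\mapsto\mathcal{M}_{A_\tau}$ is \emph{holomorphic} (in the intended application $A_\tau=A+\tau V$, so $\mathcal{M}_{A_\tau}=\mathcal{M}_A+\tau\mathcal{M}_V$ is affine) and invokes Kato's analytic perturbation theory (Theorem~1.8, p.~370 of \cite{kato}) to conclude that the projection $P_0(\tau)$ onto $\textup{Ker}(\mathcal{M}_{A_\tau})$ varies continuously; then $X_{1\tau}=P_0(\tau)X$ and $X_{2\tau}=(I-P_0(\tau))X$ are continuous, and (vii)(b) is handled identically with $B_\tau=(A_\tau+\textup{i})^{-1}$. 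So the ``specific structure'' you anticipate exploiting is analyticity of the family, not the Hilbert--Schmidt hypothesis on $V$, and the tool is Kato's theorem rather than any direct compactness argument. (Note that the lemma is \emph{stated} for merely norm-continuous $\tau\mapsto A_\tau$ but is \emph{proved} only under holomorphy; this suffices for the paper's applications.)
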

\begin{proof}
The proofs of $(i)$ to $(iii)$ are standard and for $(iv)$, we note that
since $\textup{Ker}\left(\mathcal{M}_{B}\right) = \textup{Ker}\left(\mathcal{M}_{B^*}\right)$,
$X\in \textup{Ker}\left(\mathcal{M}_{B}\right)$ if and only if $ X^*\in \textup{Ker}\left(\mathcal{M}_{B}\right)$ and hence for any $X\in \textup{Ker}\left(\mathcal{M}_{B}\right)$
can be written as $ X = \left(\frac{X+X^*}{2}\right) + \textup{i} \left(\frac{X-X^*}{2\textup{i}}\right)$, proving that $\textup{Ker}\left(\mathcal{M}_{B}\right)$
is generated by its self-adjoint elements. Similarly, by a similar argument we conclude that $ \overline{\textup{Ran}\left(\mathcal{M}_{B}\right)}$ is also generated
by its self-adjoint elements.

Let  $X\in \widetilde{\mathcal{H}}$, and  $X = X_1 \oplus X_2$ and $X^* = \left(X^*\right)_1 \oplus \left(X^*\right)_2$ be the corresponding decompositions of $X$ and $X^*$ in $\widetilde{\mathcal{H}}$.
Then for any $Y_1=Y_1^*\in \textup{Ker}\left(\mathcal{M}_{B}\right)$,
$$\langle X,Y_1\rangle _2 = \langle X_1,Y_1\rangle _2 = \textup{Tr}\{X_1^*Y_1\} = \textup{Tr}\{Y_1X_1^*\} = \langle Y_1,X_1^*\rangle _2 = \overline{\langle X_1^*,Y_1\rangle _2}.$$
But on the other hand,
$$\langle X,Y_1\rangle _2 = \textup{Tr}\{X^*Y_1\} = \textup{Tr}\{(Y_1X)^*\} = \overline{\textup{Tr}\{Y_1X\}} = \overline{\langle X^*,Y_1\rangle _2} = \overline{\langle (X^*)_1,Y_1\rangle _2}$$ and
hence $ \langle (X^*)_1- X_1^*,Y_1\rangle _2 = 0$ $\forall~~ Y_1=Y_1^*\in \textup{Ker}\left(\mathcal{M}_{B}\right)$, which implies that 
\vspace{0.1in}

$ \hspace{-0.6cm} \langle (X^*)_1- X_1^*,Y \rangle _2 = 0 ~~\forall~~ Y \in \textup{Ker}\left(\mathcal{M}_{B}\right)$, proving that $(X^*)_1 = X_1^*$.
Similarly, by the same argument we conclude that $\left(X^*\right)_2 = X_2^*$.
\vspace{0.1in}

The result $(v)$ and $(vi(a))$ follows from $(iv)$ and $(v)$ respectively. For $(vi(b))$, it suffices to note that any $X\in \mathcal{B}_2(\mathcal{H})$ commuting
with $(A+\textup{i})^{-1}$ commutes with the spectral family $E_{A}(.)$ of $A$.

For $(vii(a))$,  since the map $[0,1]\ni \tau \longrightarrow \mathcal{M}_{A_{\tau}}$ is holomorphic, then ( using {\bf{Theorem 1.8}}, ~page 370,~\cite{kato}~) we 
conclude that the map $[0,1]\ni \tau \longrightarrow P_0(\tau)$ (where $P_0(\tau)$ is the projection onto $\textup{Ker}(\mathcal{M}_{A_{\tau}})$) is continuous 
and since $X_{1\tau} \equiv P_0(\tau) X$ we get that the map $[0,1]\ni \tau \longrightarrow X_{1\tau}$ is continuous. Similarly, since the map $[0,1]\ni \tau \longrightarrow I - P_0(\tau)$
is continuous and $X_{2\tau} = (I - P_0(\tau))X$ then we conclude that the map $[0,1]\ni \tau \longrightarrow X_{2\tau}$ is also continuous.

Conclusions of $(vii(b))$ follows immediately  from $(vii(a))$ since the map $[0,1] \ni \tau \longrightarrow  \mathcal{M}_{\left(A_{\tau} + \textup{i}\right)^{-1}}$
is holomorphic, and since $\mathcal{M}_{\left(A_{\tau} + \textup{i}\right)^{-1}}$ is normal for each $\tau$.
~~~~~~~~~~~~~~~~~~~~~~~~~~~~~~~~~~~~~~~~~~~~~~~~~~~~~~~~~~~~~~~~~~~~~~~~~~~~~~~~~~~~~~~~~~~~~~~~~~~~~~~~~~~~~~~~~~~~~~~~~~~~~~~~~~~~~~~~~~~~~~~\end{proof}

\begin{rmrk}\label{rmk: b2rmk}
Let $A$ and $V$ be two bounded self-adjoint operators in an infinite dimensional Hilbert space $\mathcal{H}$ such that $V\in\mathcal{B}_2(\mathcal{H})$
and $A_{\tau} = A + \tau V$ ($0\leq \tau \leq1$). Apply Lemma \ref{lmma: decomp} with $B = A$ and $A_{\tau}$ respectively to get $V = V_1 \oplus V_2 = V_{1\tau} \oplus V_{2\tau},$
with $V_j$ and $V_{j\tau}~ (j=1,2)$ self-adjoint and therefore $\|V\|_2^2 = \|V_1\|_2^2 + \|V_2\|_2^2 = \|V_{1\tau}\|_2^2 + \|V_{2\tau}\|_2^2 ~~\forall~ 0\leq \tau \leq1$.
\end{rmrk}

\begin{thm}\label{thm: mainbddthm}
Let $A$ and $V$ be two bounded self-adjoint operators in an infinite dimensional Hilbert space $\mathcal{H}$ such that $V\in\mathcal{B}_2(\mathcal{H})$.
Then there exist a unique real-valued function $\eta \in L^1([a,b])$ such that
\begin{equation}\label{eq: bddeq} 
\textup{Tr} \left[ p(A+V) - p(A) - D^{(1)}p(A)(V) - \frac{1}{2}D^{(2)}p(A)(V,V)\right] = \int_a^b p'''(\lambda)\eta(\lambda)d\lambda,
\end{equation}
where $p(.)$ is a polynomial in $[a,b], ~~a=\left[\inf \sigma(A)\right] - \|V\|,~~b= \left[\sup \sigma(A)\right] + \|V\|$ and $\itt_a^b\eta(\lambda)d\lambda = \frac{1}{6} Tr(V^3).$
\end{thm}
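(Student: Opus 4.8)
The plan is to start from the monomial identity of Lemma \ref{lmma: bddexplmma}, which applies verbatim to our situation since $\mathcal{B}_2(\mathcal{H})\subseteq\mathcal{B}_3(\mathcal{H})$, and to collapse the double $(s,\tau)$-integral there into a single weighted integral of a third-order variation. Write $L(p)$ for the trace on the left of \eqref{eq: bddeq} and set $\Psi_r(\tau)=\sum_{k=0}^{r-2}\textup{Tr}[VA_\tau^{r-k-2}VA_\tau^k]=\textup{Tr}[V\,D^{(1)}((A_\tau)^{r-1})(V)]$, so that the integrand in \eqref{eq: bddlmmamaineq} equals $r\,(\Psi_r(\tau)-\Psi_r(0))$. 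Substituting $\Psi_r(\tau)-\Psi_r(0)=\int_0^\tau\Psi_r'(\sigma)\,d\sigma$ and interchanging the order of integration over the region $\{0\le\sigma\le\tau\le s\le1\}$ gives, for $p(\lambda)=\lambda^r$, the clean form $L(p)=\tfrac r2\int_0^1(1-\sigma)^2\,\Psi_r'(\sigma)\,d\sigma$. Since $\Psi_r'(\sigma)=\textup{Tr}[V\,D^{(2)}((A_\sigma)^{r-1})(V,V)]$ by differentiating the first Fréchet derivative in the direction $V$ (cf. \eqref{eq: secondfrechet}) and $r(A_\sigma)^{r-1}=p'(A_\sigma)$, I would record for a general polynomial $p$ the identity
\[
 L(p)=\tfrac12\int_0^1 (1-\sigma)^2\,\textup{Tr}\big[D^{(2)}(p'(A_\sigma))(V,V)\,V\big]\,d\sigma,\qquad A_\sigma=A+\sigma V .
\]
Every operator inside this trace carries three factors of $V$, hence lies in $\mathcal{B}_1(\mathcal{H})$ with $\|\cdot\|_1$ controlled by $\|V\|_2^3$, so all expressions are finite for $V\in\mathcal{B}_2(\mathcal{H})$.

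Next I would convert the integrand into an integral of $p'''$ against a measure. Using the spectral families $E_\sigma$ of $A_\sigma$ and the second-order (double operator integral) representation, $\textup{Tr}[D^{(2)}(p'(A_\sigma))(V,V)V]=2\iiint (p')[\mu,\nu,\rho]\,d\omega_\sigma(\mu,\nu,\rho)$, where $(p')[\mu,\nu,\rho]$ is the second divided difference of $p'$ and $d\omega_\sigma(\mu,\nu,\rho)=\textup{Tr}[V\,dE_\sigma(\mu)\,V\,dE_\sigma(\nu)\,V\,dE_\sigma(\rho)]$ is a signed measure on $[a,b]^3$. The Hermite--Genocchi formula rewrites $(p')[\mu,\nu,\rho]=\int_{\Delta_2}p'''(t_0\mu+t_1\nu+t_2\rho)\,dt$ over the standard $2$-simplex $\Delta_2$. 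Defining $\nu_3$ on $[a,b]$ by $\int\psi\,d\nu_3=\int_0^1(1-\sigma)^2\iiint\int_{\Delta_2}\psi(t_0\mu+t_1\nu+t_2\rho)\,dt\,d\omega_\sigma\,d\sigma$ then gives $L(p)=\int_a^b p'''\,d\nu_3$. Because $|(p')[\mu,\nu,\rho]|\le\tfrac12\|p'''\|_{\infty,[a,b]}$ and, by a Hölder/Schatten estimate on the blocks $P_iVP_j$, the total variation of $\omega_\sigma$ is at most $\|V\|_2^3$ uniformly in $\sigma$, one obtains $|L(p)|\le\tfrac16\|V\|_2^3\,\|p'''\|_{\infty,[a,b]}$; since $\{p''':p\text{ polynomial}\}$ is dense in $C[a,b]$, the Riesz representation theorem produces a unique finite real signed measure $\nu_3$ supported in $[a,b]$ with $\|\nu_3\|\le\tfrac{1}{3!}\|V\|_2^3$.

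The crux is upgrading $\nu_3$ to an absolutely continuous measure. From the pushforward description, for a Lebesgue-null set $N$ the inner integral $\int_{\Delta_2}\mathbf 1_N(t_0\mu+t_1\nu+t_2\rho)\,dt$ vanishes whenever $\mu,\nu,\rho$ are not all equal, since the image of $dt$ under a non-constant affine map is absolutely continuous; thus only the diagonal $\{\mu=\nu=\rho\}$ of $\omega_\sigma$, i.e. the atoms of $E_\sigma$, can obstruct absolute continuity. Here I would invoke Lemma \ref{lmma: decomp}: splitting $V=V_{1\sigma}\oplus V_{2\sigma}$ relative to $\mathcal{M}_{A_\sigma}$ as in Remark \ref{rmk: b2rmk}, the diagonal mass is governed by the part of $V$ commuting with $A_\sigma$, while the continuity of $\sigma\mapsto V_{1\sigma},V_{2\sigma}$ from Lemma \ref{lmma: decomp}(vii) lets me track how each eigenvalue of $A_\sigma$ moves with $\sigma$. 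The extra integration in $\sigma\in[0,1]$ then smears any such atomic contribution over a nondegenerate $\lambda$-interval, forcing $\nu_3(N)=0$. This eigenvalue-tracking argument is the main obstacle and the place where the boundedness of $A$ is essential; its conclusion is $d\nu_3=\eta\,d\lambda$ with $\eta\in L^1([a,b])$.

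Finally, $\eta$ is real-valued because for real $p$ the operator inside the trace in \eqref{eq: bddeq} is self-adjoint, so $L(p)\in\mathbb{R}$. Taking $p(\lambda)=\lambda^3$, a direct expansion gives $p(A+V)-p(A)-D^{(1)}p(A)(V)-\tfrac12 D^{(2)}p(A)(V,V)=V^3$ with $p'''\equiv6$, whence $6\int_a^b\eta\,d\lambda=\textup{Tr}(V^3)$, that is $\int_a^b\eta\,d\lambda=\tfrac16\textup{Tr}(V^3)$. Uniqueness of $\eta$ within $L^1([a,b])$ is immediate, since $L(\lambda^{m+3})$ determines the moment $\int_a^b\lambda^m\eta\,d\lambda$ for every $m\ge0$, and the moments determine an $L^1$ density on a compact interval by the Weierstrass approximation theorem.
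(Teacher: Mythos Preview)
Your route diverges substantially from the paper's. The paper never passes through divided differences or the Hermite--Genocchi representation; instead it stays with the expression of Lemma~\ref{lmma: bddexplmma}, decomposes $V=V_{1\tau}\oplus V_{2\tau}$ via Lemma~\ref{lmma: decomp}, and \emph{constructs} the density $\eta$ directly. The $V_{1\tau}$-piece is handled by an integration by parts in the spectral variable; for the $V_{2\tau}$-piece the paper approximates $V_{2\tau}$ by commutators $V_{2\tau}^{(n)}=[A_\tau,Y^{(n)}]$, integrates by parts twice to produce explicit functions $\eta_{\tau}^{(n)}$, and proves that $\eta^{(n)}=\int_0^1\!\int_0^s\eta_\tau^{(n)}\,d\tau\,ds$ is Cauchy in $L^1([a,b])$ by a Birman--Solomyak double-operator-integral estimate. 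Absolute continuity of the resulting measure is thus never argued \textit{ex post}: the $L^1$ density is produced as a limit of concrete $L^1$ functions. Your approach, by contrast, first lands on a signed measure $\nu_3$ (this part, together with the bound $\|\nu_3\|\le\tfrac1{3!}\|V\|_2^3$, is fine and essentially recovers the Dykema--Skripka measure) and only then attempts to show $\nu_3\ll d\lambda$.

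The step you yourself flag as ``the main obstacle'' is a genuine gap. After reducing $\nu_3(N)$ to the diagonal $\{\mu=\nu=\rho\}$ of $\omega_\sigma$, you must show that the atomic part $\sum_j c_j(\sigma)\,\delta_{\lambda_j(\sigma)}$ is annihilated by $\int_0^1(1-\sigma)^2\,d\sigma$ for every Lebesgue-null $N$. For a \emph{simple isolated} eigenvalue one has $\lambda_j'(\sigma)=\langle v_j(\sigma),V v_j(\sigma)\rangle$ and the diagonal weight is $c_j(\sigma)=\langle v_j,Vv_j\rangle^3=(\lambda_j')^3$, so the change of variables $\lambda=\lambda_j(\sigma)$ does convert that term into an absolutely continuous $d\lambda$-integral. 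But your proposal offers no argument for eigenvalues of higher multiplicity, for eigenvalues embedded in continuous spectrum (where Rellich-type analyticity of branches is not available), or for the summability needed to interchange $\sum_j$ with $\int_0^1 d\sigma$. Lemma~\ref{lmma: decomp}(vii) gives only $\mathcal{B}_2$-continuity of $\sigma\mapsto V_{1\sigma}$, which does not by itself let one parametrize eigenvalue branches. Until this smearing argument is made rigorous, the proposal does not establish $d\nu_3=\eta\,d\lambda$; it is precisely this difficulty that the paper's approximation-by-commutators construction circumvents.
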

\begin{proof}
It will be sufficient to prove the theorem for $p(\lambda) = \lambda^r ~~(r\geq 0)$. Note that for $r=0,1$ or $2$, both sides of \eqref{eq: bddeq} are
identically zero. We set $A_{\tau} = A + \tau V$ and $0\leq \tau \leq 1$. Then by lemma \ref{lmma: bddexplmma}, we have that 
\begin{equation*}
\begin{split}
 \hspace{-3cm} \textup{Tr} \left[(A+V)^r - A^r -D^{(1)}(A^r)(V) - \frac{1}{2}D^{(2)}(A^r)(V,V)\right]\\
&  \hspace{-8cm} = r \sum_{k=0}^{r-2} ~\int_0^1ds\int_0^sd\tau ~~\textup{Tr}\left[VA_{\tau}^{r-k-2}VA_{\tau}^k - VA^{r-k-2}VA^k\right]\\
\end{split}
\end{equation*}
\begin{equation}\label{eq: firstbddexpre}
\begin{split}
\hspace{1cm}  = r(r-1) \int_0^1ds\int_0^sd\tau ~\textup{Tr}\left[V_{1\tau}^2A_{\tau}^{r-2} - V_1^2A^{r-2}\right]\\
& \hspace{-7cm} + ~r\sum_{k=0}^{r-2} ~\int_0^1ds\int_0^sd\tau~\textup{Tr}\left[V_{2\tau}A_{\tau}^{r-k-2}V_{2\tau}A_{\tau}^k - V_{2}A^{r-k-2}V_{2}A^k\right],
\end{split}
\end{equation}
where we have also noted the invariance, orthogonality and continuity properties in  Lemma \ref{lmma: decomp} $(ii) - (vii)$ and set $V = V_1 \oplus V_2 = V_{1\tau} \oplus V_{2\tau} \in \mathcal{B}_2(\mathcal{H})$ as in Remark \ref{rmk: b2rmk}.
Using the spectral families $E_{\tau}(.)$ and $E(.)$ of the self-adjoint operators $A_{\tau}$ and $A$ respectively and integrating by-parts, the first term of the expression \eqref{eq: firstbddexpre}
is equal to
\begin{equation*}
\begin{split}
r(r-1) \int_0^1ds\int_0^sd\tau \int_a^b \lambda^{r-2} ~\textup{Tr}\left[V_{1\tau}^2E_{\tau}(d\lambda) - V_1^2E(d\lambda)\right]\\   
& \hspace{-9cm} = r(r-1) \int_0^1ds\int_0^sd\tau \{ \lambda^{r-2} ~\textup{Tr}\left[V_{1\tau}^2E_{\tau}(\lambda) - V_1^2E(\lambda)\right]\mid_{\lambda=a}^{b}\\
& \hspace{-3cm} - \int_a^b (r-2)\lambda^{r-3} ~\textup{Tr}\left[V_{1\tau}^2E_{\tau}(\lambda) - V_1^2E(\lambda)\right]d\lambda \}\\
\end{split}
\end{equation*}
\begin{equation}\label{eq: firstfinalexp}
\begin{split}
& \hspace{0cm} = r(r-1) b^{r-2} \int_0^1ds\int_0^sd\tau ~\textup{Tr}\left[V_{1\tau}^2 - V_1^2 \right]\\
& \hspace{1cm} + r(r-1)(r-2)\int_0^1 ds\int_0^s d\tau \int_a^b \lambda^{r-3} ~\textup{Tr}\left[V_1^2E(\lambda) - V_{1\tau}^2E_{\tau}(\lambda) \right]d\lambda.
\end{split}
\end{equation}
Since $V_2 \in \overline{\textup{Ran}(\mathcal{M}_{A})}$, then there exists a sequence $\{V_2^{(n)}\} \subseteq \textup{Ran}(\mathcal{M}_{A})$ such that

\hspace{-0.8cm} $\|V_2^{(n)} - V_2\|_2 \longrightarrow 0$ as $n \longrightarrow \infty$ and $V_2^{(n)} = AY_0^{(n)} - Y_0^{(n)}A$, for a sequence $\{Y_0^{(n)}\} \subseteq \mathcal{B}_2(\mathcal{H})$.
Similarly, for every $\tau \in (0,1]$, there exists a sequence $\{V_{2\tau}^{(n)}\} \subseteq \textup{Ran}(\mathcal{M}_{A_{\tau}})$ such that 
$\|V_{2\tau} ^{(n)} - V_{2\tau}\|_2 \longrightarrow 0$ point-wise as $n \longrightarrow \infty$ and $V_{2\tau}^{(n)} = A_{\tau}Y^{(n)} - Y^{(n)}A_{\tau}$, 
for some sequence $\{Y^{(n)}\} \subseteq \mathcal{B}_2(\mathcal{H})$. 
Observe that $Y_0^{(n)}$ and $Y^{(n)}$ must be skew-adjoint for each $n$, since $V_2^{(n)}$ and $V_{2\tau}^{(n)}$ can be chosen to be self-adjoint.
Furthermore, by lemma \ref{lmma: decomp} $(vii)(a)$, the map $[0,1]\ni \tau \longrightarrow V_{1\tau} , V_{2\tau} $ are continuous. 
\vspace{0.1in}

Hence the second term of the expression \eqref{eq: firstbddexpre} is equal to 
\begin{equation*}
\begin{split}
r \int_0^1ds\int_0^sd\tau~\lim_{n\rightarrow \infty}~\sum_{k=0}^{r-2}~\textup{Tr}\{V_{2\tau}A_{\tau}^{r-k-2}V_{2\tau}^{(n)}A_{\tau}^k - V_{2}A^{r-k-2}V_{2}^{(n)}A^k\}\\
& \hspace{-12cm} =  r \int_0^1ds\int_0^sd\tau\lim_{n\rightarrow \infty}~\sum_{k=0}^{r-2}\int_a^b\int_a^b \lambda^{r-k-2}\mu^{k} ~\textup{Tr}\{V_{2\tau}E_{\tau}(d\lambda)V_{2\tau}^{(n)}E_{\tau}(d\mu) - V_{2}E(d\lambda)V_{2}^{(n)}E(d\mu)\}\\
& \hspace{-12cm} =  r \int_0^1ds\int_0^sd\tau ~\lim_{n\rightarrow \infty}~ \int_a^b\int_a^b \phi(\lambda,\mu) ~\textup{Tr}\{V_{2\tau}E_{\tau}(d\lambda)V_{2\tau}^{(n)}E_{\tau}(d\mu) - V_{2}E(d\lambda)V_{2}^{(n)}E(d\mu)\},
\end{split}
\end{equation*}
where $\phi(\lambda,\mu) = \frac{\lambda^{r-1} - \mu^{r-1}}{\lambda - \mu}$ if $\lambda \neq \mu$ ~;~ $= (r-1) \lambda^{r-2}$ if $\lambda = \mu$, and where
the interchange of the limit and the integration is justified by an application of the bounded convergence theorem. Furthermore using the representation 
of $V_{2\tau}^{(n)} \in \textup{Ran}(\mathcal{M}_{A_{\tau}})$, the above reduces to
\begin{equation*}
\begin{split}
r \int_0^1ds\int_0^sd\tau ~\lim_{n\rightarrow \infty}~ \int_a^b\int_a^b \phi(\lambda,\mu) ~\textup{Tr}\{V_{2\tau}E_{\tau}(d\lambda)\left[ A_{\tau}Y^{(n)} - Y^{(n)}A_{\tau}\right]E_{\tau}(d\mu) \\ 
&\hspace{-5cm} - V_{2}E(d\lambda)\left[ AY_0^{(n)} - Y_0^{(n)}A \right]E(d\mu)\}\\
& \hspace{-14cm} = r \int_0^1ds\int_0^sd\tau \lim_{n\rightarrow \infty} \int_a^b\int_a^b \left(\lambda^{r-1} - \mu^{r-1}\right) \textup{Tr}\{V_{2\tau}E_{\tau}(d\lambda)Y^{(n)}E_{\tau}(d\mu) -  V_{2}E(d\lambda) Y_0^{(n)}E(d\mu)\}\\
\end{split}
\end{equation*}
\begin{equation}\label{eq: intbyparts}
\hspace{-3.2cm} = r \int_0^1ds\int_0^sd\tau~ \lim_{n\rightarrow \infty}~ \int_a^b \lambda^{r-1} ~\textup{Tr}\{V_{2\tau}\left[E_{\tau}(d\lambda),Y^{(n)}\right] -  V_{2}\left[E(d\lambda), Y_0^{(n)}\right]\}. 
\end{equation}
Again by twice integrating by-parts, the expression in \eqref{eq: intbyparts} is equal to
\begin{equation*}
\begin{split}
r \int_0^1ds\int_0^sd\tau~ \lim_{n\rightarrow \infty}~ \{ \lambda^{r-1} ~\textup{Tr}\left(V_{2\tau}\left[E_{\tau}(\lambda),Y^{(n)}\right] -  V_{2}\left[E(\lambda), Y_0^{(n)}\right]\right)|_{\lambda = a}^b\\
& \hspace{-8cm} - \int_a^b (r-1)\lambda^{r-2} ~\textup{Tr}\left(V_{2\tau}\left[E_{\tau}(\lambda),Y^{(n)}\right] -  V_{2}\left[E(\lambda), Y_0^{(n)}\right]\right) d\lambda\}\\
& \hspace{-13cm} = -r(r-1) \int_0^1ds\int_0^sd\tau~ \lim_{n\rightarrow \infty}~\int_a^b \lambda^{r-2} ~\textup{Tr}\{V_{2\tau}\left[E_{\tau}(\lambda),Y^{(n)}\right] -  V_{2}\left[E(\lambda), Y_0^{(n)}\right]\}~ d\lambda
\end{split}
\end{equation*}
\begin{equation}\label{eq: longexpre}
\begin{split}
= -r(r-1) \int_0^1ds\int_0^sd\tau~ \lim_{n\rightarrow \infty}~ \{ \lambda^{r-2} \left(\int_a^{\lambda}~\textup{Tr}\left(V_{2\tau}\left[E_{\tau}(\mu),Y^{(n)}\right] -  V_{2}\left[E(\mu), Y_0^{(n)}\right]\right) d\mu\right)\}|_{\lambda = a}^b\\
& \hspace{-16.8cm}  + r(r-1) \int_0^1ds\int_0^sd\tau~ \lim_{n\rightarrow \infty} \int_a^b (r-2) \lambda^{r-3} \left(\int_{a}^{\lambda}~\textup{Tr}\left(V_{2\tau}\left[E_{\tau}(\mu),Y^{(n)}\right] -  V_{2}\left[E(\mu), Y_0^{(n)}\right]\right) d\mu \right)d\lambda\\
& \hspace{-16.8cm} = -r(r-1) b^{r-2} \int_0^1ds\int_0^sd\tau~ \lim_{n\rightarrow \infty}~ \int_a^{b}~\textup{Tr}\left(V_{2\tau}\left[E_{\tau}(\mu),Y^{(n)}\right] -  V_{2}\left[E(\mu), Y_0^{(n)}\right]\right) d\mu\\
& \hspace{-16.8cm}  + r(r-1)(r-2) \int_0^1ds\int_0^sd\tau\lim_{n\rightarrow \infty}\int_a^b\lambda^{r-3}\left(\int_{a}^{\lambda}\textup{Tr}\left(V_{2\tau}\left[E_{\tau}(\mu),Y^{(n)}\right] -  V_{2}\left[E(\mu), Y_0^{(n)}\right]\right) d\mu \right)d\lambda.
\end{split}
\end{equation}
Next we note that by an integration by-parts, 
\begin{equation*}
\begin{split}
\textup{Tr}\left( V_{2\tau}^2 - V_2^2 \right) = \lim_{n\rightarrow \infty} \textup{Tr}\left(V_{2\tau}V_{2\tau}^{(n)} - V_2 V_2^{(n)}\right) = \lim_{n\rightarrow \infty} \textup{Tr}\left(V_{2\tau}\left[A_{\tau},Y^{(n)}\right] - V_2\left[A,Y_0^{(n)}\right]\right)\\
& \hspace{-15.5cm} =  \lim_{n\rightarrow \infty} \int_a^b\mu \textup{Tr}\left(V_{2\tau}\left[E_{\tau}(d\mu),Y^{(n)}\right] - V_2\left[E(d\mu),Y_0^{(n)}\right]\right)\\
& \hspace{-15.5cm} = \lim_{n\rightarrow \infty} \left[\mu \textup{Tr}\left(V_{2\tau}\left[E_{\tau}(\mu),Y^{(n)}\right] - V_2\left[E(\mu),Y_0^{(n)}\right]\right)|_{\mu =a}^b - \int_a^b \textup{Tr}\left(V_{2\tau}\left[E_{\tau}(\mu),Y^{(n)}\right] - V_2\left[E(\mu),Y_0^{(n)}\right]\right)d\mu\right].\\
\end{split}
\end{equation*}
The boundary term above vanishes and substituting the above in the first expression in \eqref{eq: longexpre}, we get that the right hand side of \eqref{eq: longexpre}
\begin{equation*}
\begin{split}
 = r(r-1) b^{r-2} \int_0^1ds\int_0^sd\tau~\textup{Tr}\left( V_{2\tau}^2 - V_2^2 \right) + r(r-1)(r-2) \int_0^1ds\int_0^sd\tau\lim_{n\rightarrow \infty}\int_a^b\lambda^{r-3}\eta_{2\tau}^{(n)}(\lambda)d\lambda,
\end{split}
\end{equation*}
where $\eta_{2\tau}^{(n)}(\lambda) = \itt_{a}^{\lambda}\textup{Tr}\left(V_{2\tau}\left[E_{\tau}(\mu),Y^{(n)}\right] -  V_{2}\left[E(\mu), Y_0^{(n)}\right]\right) d\mu.$

\hspace{-0.8cm} Hence
\begin{equation}\label{eq: secondfinalexp}
\begin{split}
& r\sum_{k=0}^{r-2} \int_0^1ds\int_0^sd\tau~\textup{Tr}\left[V_{2\tau}A_{\tau}^{r-k-2}V_{2\tau}A_{\tau}^k - V_{2}A^{r-k-2}V_{2}A^k\right]\\
& = r(r-1) b^{r-2} \int_0^1ds\int_0^sd\tau~\textup{Tr}\left( V_{2\tau}^2 - V_2^2 \right) + r(r-1)(r-2) \int_0^1ds\int_0^sd\tau\lim_{n\rightarrow \infty}\int_a^b\lambda^{r-3}\eta_{2\tau}^{(n)}(\lambda)d\lambda.
\end{split}
\end{equation}
Combining \eqref{eq: firstfinalexp} and \eqref{eq: secondfinalexp} and since $\|V\|_2^2 = \textup{Tr}\left(V_{1\tau}^2 + V_{2\tau}^2\right) = \textup{Tr}\left(V_1^2 + V_2^2\right)$,
we conclude that

\begin{equation*}
\begin{split}
\textup{Tr} \left[(A+V)^r - A^r -D^{(1)}(A^r)(V) - \frac{1}{2}D^{(2)}(A^r)(V,V)\right]\\
& \hspace{-8cm} = r(r-1)(r-2)\int_0^1 ds\int_0^s d\tau \int_a^b \lambda^{r-3} ~\textup{Tr}\left[V_1^2E(\lambda) - V_{1\tau}^2E_{\tau}(\lambda) \right]d\lambda\\
& \hspace{-7cm} + r(r-1)(r-2) \int_0^1ds\int_0^sd\tau\lim_{n\rightarrow \infty}\int_a^b\lambda^{r-3}~\eta_{2\tau}^{(n)}(\lambda)d\lambda\\
& \hspace{-8cm} = r(r-1)(r-2) \int_0^1ds\int_0^sd\tau\lim_{n\rightarrow \infty}\int_a^b\lambda^{r-3}~\eta_{\tau}^{(n)}(\lambda)d\lambda\\
& \hspace{-8cm} = \lim_{n\rightarrow \infty}\int_a^b\left(\lambda^r\right)''' ~\eta^{(n)}(\lambda)d\lambda~,\quad \text{where} \quad
\end{split}
\end{equation*}
$\eta^{(n)}(\lambda) \equiv \itt_0^1ds\itt_0^s d\tau ~\eta_{\tau}^{(n)}(\lambda)$ and $ \eta_{\tau}^{(n)} (\lambda) = \left[~\textup{Tr}\{V_1^2E(\lambda) - V_{1\tau}^2E_{\tau}(\lambda) \}  + \eta_{2\tau}^{(n)}(\lambda)\right]$, 
the interchange of limit and the $\tau$- and $s$- integral is justified by an easy application of bounded convergence theorem. Note that $\eta^{(n)}$ is a
real-valued function $\forall ~n$.
\vspace{0.1in}

Next we want to show that $\{\eta^{(n)}\}$ is cauchy in $L^1([a,b])$ and we follow the idea from \cite{gesztesykop}. For that let $f\in L^{\infty}([a,b])$. Define $g(\lambda) = \itt_{a}^{\lambda} f(t)dt$ and $h(\lambda) = \itt_a^{\lambda} g(\mu)d\mu$,
then $g'(\lambda) = f(\lambda)$ a.e. and $h'(\lambda) = g(\lambda)$. Now consider the expression
\begin{equation*}
\begin{split}
\int_a^b f(\lambda) \left[\eta_{\tau}^{(n)}(\lambda) - \eta_{\tau}^{(m)}(\lambda)\right] d\lambda = \int_a^b f(\lambda) \left[\eta_{2\tau}^{(n)}(\lambda) - \eta_{2\tau}^{(m)}(\lambda)\right] d\lambda\\
& \hspace{-10cm} = \int_a^b h''(\lambda) d\lambda \left( \int_{a}^{\lambda}\textup{Tr}\left(V_{2\tau}\left[E_{\tau}(\mu),Y^{(n)}-Y^{(m)}\right] -  V_{2}\left[E(\mu), Y_0^{(n)}-Y_0^{(m)}\right] \right)d\mu\right),
\end{split}
\end{equation*}
which on integration by-parts twice and on observing that the boundary term for $\lambda =a$ vanishes, leads to
\begin{equation}\label{eq: bddintbypart}
\begin{split}
h'(b) \int_{a}^{b}\textup{Tr}\left(V_{2\tau}\left[E_{\tau}(\mu),Y^{(n)}-Y^{(m)}\right] -  V_{2}\left[E(\mu), Y_0^{(n)}-Y_0^{(m)}\right] \right)d\mu\\
& \hspace{-10cm} - \{h(\lambda) \textup{Tr}\left(V_{2\tau}\left[E_{\tau}(\lambda),Y^{(n)}-Y^{(m)}\right] -  V_{2}\left[E(\lambda), Y_0^{(n)}-Y_0^{(m)}\right] \right)|_{\lambda =a}^b\\
& \hspace{-8cm}  -\int_a^b h(\lambda) \textup{Tr}\left(V_{2\tau}\left[E_{\tau}(d\lambda),Y^{(n)}-Y^{(m)}\right] -  V_{2}\left[E(d\lambda), Y_0^{(n)}-Y_0^{(m)}\right] \right)\}\\
& \hspace{-12cm} = h'(b) \int_{a}^{b}\textup{Tr}\left(V_{2\tau}\left[E_{\tau}(\mu),Y^{(n)}-Y^{(m)}\right] -  V_{2}\left[E(\mu), Y_0^{(n)}-Y_0^{(m)}\right] \right)d\mu\\
& \hspace{-9cm} +\int_a^b h(\lambda) \textup{Tr}\left(V_{2\tau}\left[E_{\tau}(d\lambda),Y^{(n)}-Y^{(m)}\right] -  V_{2}\left[E(d\lambda), Y_0^{(n)}-Y_0^{(m)}\right] \right).
\end{split}
\end{equation}
Next we use the identity $$\textup{Tr}\left(V_{2\tau}V_{2\tau}^{(n)} - V_2 V_2^{(n)}\right) = - \int_a^b \textup{Tr}\left(V_{2\tau}\left[E_{\tau}(\mu),Y^{(n)}\right] - V_2\left[E(\mu),Y_0^{(n)}\right]\right)d\mu$$
to reduce the the above expression in \eqref{eq: bddintbypart} to
\begin{equation}\label{eq: comb1}                           
g(b) \textup{Tr}\left(V_2\left[V_2^{(n)}-V_2^{(m)}\right]-V_{2\tau}\left[V_{2\tau}^{(n)}-V_{2\tau}^{(m)}\right]\right) + \textup{Tr}\left(V_{2\tau}\left[h(A_{\tau}),Y^{(n)}-Y^{(m)}\right]-V_2\left[h(A),Y_0^{(n)}-Y_0^{(m)}\right]\right).
\end{equation}
But on the other hand,
\begin{equation*}
\left[h(A),Y^{(n)}\right] = \int_a^b\int_a^b\frac{h(\lambda)-h(\mu)}{\lambda-\mu} ~~E(d\lambda)V_2^{(n)}E(d\mu), \quad \text{and} \quad
\end{equation*}
\begin{equation}\label{eq: comb2}
\textup{Tr}\left( V_2\left[h(A),Y_0^{(n)}-Y_0^{(m)}\right]\right) = \int_a^b\int_a^b\frac{h(\lambda)-h(\mu)}{\lambda-\mu} ~\textup{Tr}\left(V_2E(d\lambda)\left[V_2^{(n)}-V_2^{(m)}\right]E(d\mu)\right)
\end{equation}
and hence as in Birman-Solomyak (\cite{birmansolomyak},\cite{birsolomyak}) and in \cite{chttosinha} ,
$$\left|\textup{Tr}\left( V_2\left[h(A),Y_0^{(n)}-Y_0^{(m)}\right]\right)\right| \leq \|h\|_{Lip} \|V_2\|_2\left\|\left[V_2^{(n)}-V_2^{(m)}\right]\right\|_2 \leq (b-a) \|f\|_{\infty}\|V\|_2\left\|\left[V_2^{(n)}-V_2^{(m)}\right]\right\|_2$$
and hence 
$$\frac{\left|\int_a^b f(\lambda) \left[\eta_{\tau}^{(n)}(\lambda) - \eta_{\tau}^{(m)}(\lambda)\right] d\lambda\right|}{\|f\|_{\infty}} \leq 2(b-a)\|V\|_2\left( \left\|\left[V_2^{(n)}-V_2^{(m)}\right]\right\|_2 + \left\|\left[V_{2\tau}^{(n)}-V_{2\tau}^{(m)}\right]\right\|_2\right)$$
$$i.e.~~~~~ \|\eta_{\tau}^{(n)} - \eta_{\tau}^{(m)} \|_{L^1} ~~\leq 2(b-a)\|V\|_2\left( \left\|\left[V_2^{(n)}-V_2^{(m)}\right]\right\|_2 + \left\|\left[V_{2\tau}^{(n)}-V_{2\tau}^{(m)}\right]\right\|_2\right), $$
which converges to $0$ as $n,m\rightarrow \infty$ and $\forall ~~\tau\in[0,1].$ A similar computation also shows that $\|\eta_{\tau}^{(n)}\|_{L^1} \leq 2(b-a)\|V\|_2^2$.
Therefore $\{\itt_0^1ds\itt_0^s d\tau ~\eta_{\tau}^{(n)}(\lambda) \equiv \eta^{(n)}(\lambda)\}$ is also cauchy in $L^1([a,b])$
and thus there exists a function $\eta \in L^1([a,b])$ such that $\|\eta^{(n)} - \eta\|_{L^1}\rightarrow 0$ as $n\longrightarrow \infty$, by the bounded
convergence theorem and hence also $\|\eta\|_{L^1}\leq (b-a)\|V\|_2^2$. Therefore,
$$ \lim_{n\longrightarrow \infty}\int_a^b\lambda^{r-3}\eta^{(n)}(\lambda)d\lambda = \int_a^b\lambda^{r-3}\eta(\lambda)d\lambda\quad \text{and hence} \quad$$
$$\textup{Tr} \left[(A+V)^r - A^r -D^{(1)}(A^r)(V) - \frac{1}{2}D^{(2)}(A^r)(V,V)\right] = r(r-1)(r-2)\int_a^b\lambda^{r-3}\eta(\lambda)d\lambda. $$
For uniqueness, let us assume that there exists $\eta_1,~\eta_2\in L^1([a,b])$ such that 
$$\textup{Tr} \left[ p(A+V) - p(A) - D^{(1)}p(A)(V) - \frac{1}{2}D^{(2)}p(A)(V,V)\right] = \int_a^b p'''(\lambda)\eta_j(\lambda)d\lambda,$$
where $p(.)$ is a polynomial and $j=1,2$. Therefore $$\int_a^b p'''(\lambda)~\eta(\lambda)~d\lambda = 0 ~~\forall \quad \text{polynomials} \quad p(.) \quad \text{and} \quad \eta \equiv \eta_1 -\eta_2\in L^1([a,b]),$$
which together with the fact that $\int_a^b\eta_1(\lambda)~d\lambda = \int_a^b\eta_2(\lambda)~d\lambda = \frac{1}{6}\textup{Tr}(V^3)$~(which one can easily
arrive at by setting $p(\lambda) = \lambda^3$ in the above formula), implies that
$$\int_a^b\lambda^r\eta(\lambda)d\lambda = 0 ~~\forall ~~r\geq 0.\quad \text{ Hence by an application of Fubini's theorem, we get that} \quad$$
$$ \int_{-\infty}^{\infty}~e^{-\textup{i}t\lambda}~\eta(\lambda)~ d\lambda ~= ~~\sum_{n=0}^{\infty} ~\frac{1}{n!}~\int_{-\infty}^{\infty}~(-\textup{i}t\lambda)^n~\eta(\lambda)~d\lambda = 0.$$ Hence $$\int_{-\infty}^{\infty}~\textup{e}^{-\textup{i}t\lambda}~\eta(\lambda)~ d\lambda = 0 ~\forall ~t\in \mathbb{R}.$$
Therefore $\eta$ is an $L^1([a,b])$- function whose Fourier transform $\hat{\eta}(t)$ vanishes identically, implying that $\eta = 0$ or $\eta_1 = \eta_2$ a.e.
~~~~~~~~~~~~~~~~~~~~~~~~~~~~~~~~~~~~~~~~~~~~~~~~~~~~~~~~~~~~~~~~~~~~~~~~~~~~~~~~\end{proof}
\begin{crlre}
Let $A$ and $V$ be two bounded self-adjoint operators in an infinite dimensional Hilbert space $\mathcal{H}$ such that $V\in\mathcal{B}_2(\mathcal{H})$.
Then the function $\eta \in L^1([a,b])$ obtained as in theorem \ref{thm: mainbddthm} satisfies the following equation
\begin{equation*}
\begin{split}
\int_{a}^{b} f(\lambda) \eta(\lambda)d\lambda = \int_0^1ds\int_0^sd\tau \int_a^{b}\int_a^{b}\frac{h(\lambda)-h(\mu)}{\lambda -\mu} ~\textup{Tr}\left[VE_{\tau}(d\lambda)VE_{\tau}(d\mu) - VE(d\lambda)VE(d\mu) \right],
\end{split}
\end{equation*}
where $f(\lambda)$ , $g(\lambda)$ and $h(\lambda)$  are as in the proof of the theorem \ref{thm: mainbddthm}.
\end{crlre}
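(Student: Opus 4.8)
The plan is to unwind the construction of $\eta$ in the proof of Theorem \ref{thm: mainbddthm} and pair it against a general $f\in L^\infty([a,b])$. Since $\|\eta^{(n)}-\eta\|_{L^1}\to 0$ and $f$ is bounded, $\int_a^b f\eta\,d\lambda=\lim_{n}\int_a^b f\eta^{(n)}\,d\lambda$, and writing $\eta^{(n)}(\lambda)=\int_0^1 ds\int_0^s d\tau\,\eta_\tau^{(n)}(\lambda)$ with $\eta_\tau^{(n)}=\textup{Tr}\{V_1^2E(\lambda)-V_{1\tau}^2E_\tau(\lambda)\}+\eta_{2\tau}^{(n)}$, a bounded-convergence argument (as in the theorem) lets me pull the limit inside the $s$- and $\tau$-integrals. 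Thus it suffices to evaluate $\lim_n\int_a^b f(\lambda)\eta_\tau^{(n)}(\lambda)\,d\lambda$ for each fixed $\tau$.

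For the $V_1$-part I integrate once by parts using $g'=f$, and for $\int_a^b f\,\eta_{2\tau}^{(n)}\,d\lambda$ I integrate twice by parts using $h''=f$ and $h'=g$, exactly reproducing the computation that led from \eqref{eq: bddintbypart} to \eqref{eq: comb1}. Letting $n\to\infty$ and using $V_2^{(n)}\to V_2$, $V_{2\tau}^{(n)}\to V_{2\tau}$ in $\mathcal{B}_2$ together with the double-operator-integral representation \eqref{eq: comb2}, the $V_2$-terms become $\int_a^b\int_a^b\frac{h(\lambda)-h(\mu)}{\lambda-\mu}\textup{Tr}\{V_{2\tau}E_\tau(d\lambda)V_{2\tau}E_\tau(d\mu)\}$ and its $\tau=0$ analogue. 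All boundary contributions collect into a single multiple of $g(b)\big[(\|V_1\|_2^2+\|V_2\|_2^2)-(\|V_{1\tau}\|_2^2+\|V_{2\tau}\|_2^2)\big]$, which vanishes by Remark \ref{rmk: b2rmk}.

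The key remaining step is a decomposition identity: for $B=A$ (and likewise for $A_\tau$),
\begin{equation*}
\int_a^b\int_a^b\frac{h(\lambda)-h(\mu)}{\lambda-\mu}\textup{Tr}\{VE(d\lambda)VE(d\mu)\}=\int_a^b g(\lambda)\,\textup{Tr}\{V_1^2E(d\lambda)\}+\int_a^b\int_a^b\frac{h(\lambda)-h(\mu)}{\lambda-\mu}\textup{Tr}\{V_2E(d\lambda)V_2E(d\mu)\}.
\end{equation*}
Expanding $V=V_1\oplus V_2$ and using that $V_1$ commutes with $E(\cdot)$, every term carrying a factor $V_1$ is supported on the diagonal $\lambda=\mu$, where the kernel equals $h'(\lambda)=g(\lambda)$; the $V_1^2$ term gives $\int_a^b g\,\textup{Tr}\{V_1^2E(d\lambda)\}$, while each of the two cross terms reduces to $\textup{Tr}\{V_1V_2\,g(A)\}=\langle V_1,V_2\,g(A)\rangle_2=0$. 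The vanishing uses that $\overline{\textup{Ran}(\mathcal{M}_A)}$ is invariant under right multiplication by $g(A)$ (Lemma \ref{lmma: decomp}$(ii)$, extended from powers of $A$ to bounded Borel functions by approximation) and is orthogonal to $\textup{Ker}(\mathcal{M}_A)\ni V_1$.

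Substituting this identity and its $\tau$-analogue cancels the surviving $\textup{Tr}\{V_1^2E\}$ and $\textup{Tr}\{V_{1\tau}^2E_\tau\}$ diagonal terms against the $g$-integrals produced by the integration by parts, leaving exactly $\int_a^b\int_a^b\frac{h(\lambda)-h(\mu)}{\lambda-\mu}\textup{Tr}\{VE_\tau(d\lambda)VE_\tau(d\mu)-VE(d\lambda)VE(d\mu)\}$ inside the $s,\tau$-integral, which is the asserted formula. I expect the decomposition identity of the third paragraph — in particular the concentration of the $V_1$-cross terms on the diagonal and their vanishing via the orthogonality and invariance of Lemma \ref{lmma: decomp} — to be the main obstacle; the interchange of the $n$-limit with the $s,\tau$-integrations and the $\mathcal{B}_2$-convergence of the double operator integrals are secondary points handled as in the theorem.
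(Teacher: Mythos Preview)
Your proposal is correct and follows essentially the same route as the paper's proof: the paper also integrates the $V_1$-term once by parts to produce $\textup{Tr}[V_{1\tau}^2 h'(A_\tau)-V_1^2 h'(A)]$ (rewritten as a diagonal double integral), reproduces \eqref{eq: comb1}--\eqref{eq: comb2} for the $V_2^{(n)}$-term, and then combines $V_1E(d\lambda)V_1E(d\mu)+V_2E(d\lambda)V_2^{(n)}E(d\mu)$ into $VE(d\lambda)(V_1\oplus V_2^{(n)})E(d\mu)$ before passing to the limit --- the implicit vanishing of the cross terms there is exactly your ``decomposition identity''. The only cosmetic difference is that the paper keeps $V_2^{(n)}$ throughout and takes the limit $n\to\infty$ once at the end, whereas you take it earlier for the $V_2$-piece.
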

\begin{proof}
By Fubini's theorem we have that 
\begin{equation*}
\begin{split} 
\int_{a}^{b} f(\lambda) \eta ^{(n)}(\lambda) d\lambda = \int_0^1ds \int_0^sd\tau \int_{a}^{b} f(\lambda) \left[~\textup{Tr}\{V_1^2E(\lambda) - V_{1\tau}^2E_{\tau}(\lambda) \}  + \eta_{2\tau}^{(n)}(\lambda)\right] d\lambda.
\end{split}
\end{equation*}
But 
\begin{equation*}
\begin{split} 
\int_{a}^{b} f(\lambda) ~\textup{Tr}\{V_1^2E(\lambda) - V_{1\tau}^2E_{\tau}(\lambda) \} d\lambda = \int_{a}^{b} g'(\lambda)  \textup{Tr}\left[ V_1^2E(\lambda)-V_{1\tau}^2E_{\tau}(\lambda)\right] d\lambda,
\end{split}
\end{equation*}
which by integrating  by-parts leads to
\begin{equation}\label{eq: bddcor1}
\begin{split} 
g(b) ~\textup{Tr}\left[ V_{1}^2 - V_{1\tau}^2 \right] + \int_{a}^{b} g(\lambda)  \textup{Tr}\left[ V_{1\tau}^2E_{\tau}(d\lambda)-V_{1}^2E(d\lambda)\right] \\
& \hspace{-10cm} = g(b) ~\textup{Tr}\left[ V_{1}^2 - V_{1\tau}^2 \right] + \textup{Tr}\left[ V_{1\tau}^2 h'(A_{\tau})-V_{1}^2 h'(A)\right]\\
& \hspace{-10cm} =  g(b) ~\textup{Tr}\left[ V_{1}^2 - V_{1\tau}^2 \right] + \int_a^{b}\int_a^{b}\frac{h(\lambda)-h(\mu)}{\lambda -\mu} ~\textup{Tr}\left[V_{1\tau}E_{\tau}(d\lambda)V_{1\tau}E_{\tau}(d\mu) -V_1E(d\lambda)V_1E(d\mu)\right].
\end{split}
\end{equation}
Again by repeating the same above calculations to get \eqref{eq: comb1} and \eqref{eq: comb2} as in the proof of the theorem \ref{thm: mainbddthm}, we conclude that
\begin{equation}\label{eq: bddcor2}
\begin{split} 
\int_{a}^{b} f(\lambda) \eta_{2\tau}^{(n)}(\lambda)  d\lambda = g(b) ~\textup{Tr}\left[ V_2 V_2^{(n)} - V_{2\tau} V_{2\tau}^{(n)} \right]\\
& \hspace{-6cm}  + \int_a^{b}\int_a^{b}\frac{h(\lambda)-h(\mu)}{\lambda -\mu} ~\textup{Tr}\left[ V_{2\tau}E_{\tau}(d\lambda)V_{2\tau}^{(n)}E_{\tau}(d\mu) - V_2E(d\lambda)V_2^{(n)}E(d\mu)\right].
\end{split}
\end{equation}
Combining \eqref{eq: bddcor1} and \eqref{eq: bddcor2} we have,

\begin{equation}\label{eq: bddcor3}
\begin{split} 
\int_{a}^{b} f(\lambda) \eta^{(n)}(\lambda)  d\lambda \\
& \hspace{-3cm} = \textup{Tr}\left[ \left(V_1^2 +V_2 V_2^{(n)}\right) - \left(V_{2\tau}^2 + V_{2\tau} V_{2\tau}^{(n)}\right) \right]\\
& \hspace{-3cm} + \int_0^1 ds\int_0^s d\tau\int_a^{b}\int_a^{b}\frac{h(\lambda)-h(\mu)}{\lambda -\mu} ~\textup{Tr}\left[ VE_{\tau}(d\lambda)\left(V_{1\tau} \oplus V_{2\tau}^{(n)}\right) E_{\tau}(d\mu) - VE(d\lambda)\left(V_1 \oplus V_2^{(n)}\right)E(d\mu)\right].
\end{split}
\end{equation}
But by definition $V_{2}^{(n)}, V_{2\tau}^{(n)}$ converges to $V_2 , V_{2\tau}$ respectively in $\|.\|_2$ and we have already proved that $\eta^{(n)}$ converges to $\eta$ in $ L^1([a,b]).$
 Hence by taking limit on both sides of \eqref{eq: bddcor3} we get that 
\begin{equation}\label{eq: bddcor4}
\begin{split} 
\int_{a}^{b} f(\lambda) \eta(\lambda)  d\lambda = \int_0^1 ds\int_0^s d\tau\int_a^{b}\int_a^{b}\frac{h(\lambda)-h(\mu)}{\lambda -\mu} ~\textup{Tr}\left[VE_{\tau}(d\lambda)V E_{\tau}(d\mu) - VE(d\lambda)VE(d\mu) \right].
\end{split}
\end{equation}
In the right hand side of  \eqref{eq: bddcor4} we have used the fact that

\hspace{-0.8cm} $Var\left(\mathcal{G}_2^{(n)} -\mathcal{G}_2\right) \leq ~\|V\|_2 \left( \|V_{2\tau}^{(n)} - V_{2\tau}\|_2 + ~\|V_2 - V_2^{(n)}\| \right) \longrightarrow 0$
as $n\longrightarrow \infty$, where 

\hspace{-0.8cm} $\mathcal{G}_2^{(n)} (\Delta \times \delta) = \textup{Tr}\left[ VE_{\tau}(\Delta)\left(V_{1\tau} \oplus V_{2\tau}^{(n)}\right) E_{\tau}(\delta) - VE(\Delta)\left(V_1 \oplus V_2^{(n)}\right)E(\delta)\right] $ and

\hspace{-0.8cm} $\mathcal{G}_2\left(\Delta \times \delta\right) = \textup{Tr}\left[ VE_{\tau}(\Delta)V E_{\tau}(\delta) - VE(\Delta)VE(\delta) \right]$
are complex measures on $\mathbb{R}^2$ and 

\hspace{-0.8cm} $Var\left(\mathcal{G}_2^{(n)} -\mathcal{G}_2\right)$ is the variation of $\left(\mathcal{G}_2^{(n)} -\mathcal{G}_2\right)$ and also noted that $ \|g\|_{\textup{Lip}}\leq (b-a) \|f\|_{\infty}.$
\end{proof}

\section{Unbounded Case}

\begin{thm}
Let $A$ be an unbounded self-adjoint operator in a Hilbert space $\mathcal{H}$ and let $\phi:\mathbb{R}\longrightarrow \mathbb{C}$ be such that 
$\int_{-\infty}^{\infty} |\hat{\phi}(t)|~(1 + |t|)^{3} ~dt < \infty$, where $\hat \phi$ is the Fourier transform of $\phi$. Then $\phi(A),~D^{(1)}\phi(A),$ $~D^{(2)}\phi(A)$ exist and 
$$\left[D^{(1)}\phi(A)\right](X) = \textup{i}\int_{-\infty}^{\infty} \hat{\phi}(t)dt\int_0^td\beta ~\textup{e}^{\textup{i}\beta A}X\textup{e}^{\textup{i}(t-\beta) A} \quad \text{and} \quad$$
\begin{equation*}
\begin{split}
\left[D^{(2)}\phi(A)\right](X,Y) = \textup{i}^2\int_{-\infty}^{\infty} \hat{\phi}(t)dt\int_0^td\beta ~\{\int_0^{\beta}d\nu ~\textup{e}^{\textup{i}\nu A}X\textup{e}^{\textup{i}(\beta-\nu)A}Y\textup{e}^{\textup{i}(t-\beta)A}\\ 
& \hspace{-4cm} + \int_0^{t-\beta}d\nu ~\textup{e}^{\textup{i}\beta A}Y\textup{e}^{\textup{i}\nu A}X\textup{e}^{\textup{i}(t-\beta-\nu)A} \},
\end{split}
\end{equation*}
where $X,Y\in \mathcal{B}_2(\mathcal{H})$.
\end{thm}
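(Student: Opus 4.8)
The plan is to represent $\phi(A)$ through the Fourier inversion formula $\phi(A)=\int_{-\infty}^{\infty}\hat\phi(t)\,e^{itA}\,dt$, where the integral converges in operator norm because $\int|\hat\phi(t)|\,dt<\infty$ (a consequence of the hypothesis) and $\|e^{itA}\|=1$; this bounded operator coincides with the one furnished by the spectral theorem, and likewise $\phi(A+X)=\int\hat\phi(t)\,e^{it(A+X)}\,dt$ for self-adjoint perturbation directions. Since $A$ enters only through the unitary group $\{e^{itA}\}$, everything reduces to differentiating $t\mapsto e^{it(A+X)}$ in the perturbation and then integrating against $\hat\phi(t)$. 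The decisive simplification from working with $\mathcal{B}_2(\mathcal{H})$ is the isometry $\|e^{i\beta A}Xe^{i\gamma A}\|_2=\|X\|_2$, valid for all real $\beta,\gamma$ because $e^{i\beta A}$ is unitary; thus every integrand appearing below is a norm-continuous $\mathcal{B}_2(\mathcal{H})$-valued function of the time variables on a bounded interval, and all the time integrals converge in $\mathcal{B}_2(\mathcal{H})$.

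First I would record the Duhamel--Dyson expansion
\begin{equation*}
e^{it(A+X)}=e^{itA}+\sum_{n\geq1} i^{n}\int_{0\leq s_1\leq\cdots\leq s_n\leq t} e^{i(t-s_n)A}X\,e^{i(s_n-s_{n-1})A}X\cdots X\,e^{is_1A}\,ds_1\cdots ds_n,
\end{equation*}
whose $n$-th term has $\mathcal{B}_2(\mathcal{H})$-norm at most $\frac{|t|^{n}}{n!}\|X\|_2^{n}$ by the isometry above. After the relabeling $\beta=t-s_1$, the $n=1$ term is exactly $i\int_0^t e^{i\beta A}Xe^{i(t-\beta)A}\,d\beta$. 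Integrating the expansion against $\hat\phi(t)$ and using $\int|\hat\phi(t)|(1+|t|)^2\,dt<\infty$ to dominate the $n\geq2$ tail, I would deduce $\|\phi(A+X)-\phi(A)-D^{(1)}\phi(A)(X)\|_2=O(\|X\|_2^2)$ with $D^{(1)}\phi(A)(X)$ given by the stated formula, the polynomial-in-$t$ bounds against the decaying $\hat\phi$ justifying (by dominated convergence and a Fubini argument in $\mathcal{B}_2(\mathcal{H})$) the interchange of the $t$-integral with the derivative limit and with the inner time integral. This identifies the first Fréchet derivative; since the formula is linear in $X$, it then holds for all $X\in\mathcal{B}_2(\mathcal{H})$.

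For the second derivative I would differentiate $s\mapsto D^{(1)}\phi(A+sY)(X)$ at $s=0$, equivalently isolate the bilinear-in-$(X,Y)$ part of the Dyson series. Applying the first-order formula to each of the two exponentials $e^{i\beta(A+sY)}$ and $e^{i(t-\beta)(A+sY)}$ inside $i\int\hat\phi(t)\,dt\int_0^t e^{i\beta(A+sY)}Xe^{i(t-\beta)(A+sY)}\,d\beta$ produces precisely the two inner integrals $\int_0^{\beta}d\nu$ and $\int_0^{t-\beta}d\nu$ displayed in the statement, carrying the prefactor $i^2$; the placement of $X$ and $Y$ in the two terms is then fixed by the symmetry $D^{(2)}\phi(A)(X,Y)=D^{(2)}\phi(A)(Y,X)$, and bilinearity extends the identity to arbitrary $X,Y\in\mathcal{B}_2(\mathcal{H})$. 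The remainder controlling genuine second-order differentiability is the $n=3$ Dyson term, of $\mathcal{B}_2(\mathcal{H})$-norm at most $\frac{|t|^3}{6}\|X\|_2^3$, and it is exactly here that the full weight $(1+|t|)^3$ in the hypothesis is consumed.

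The main obstacle is not any single algebraic identity but the systematic bookkeeping of the interchanges: one must justify, with bounds that are uniform in $t$ up to polynomial growth, both that the $\mathcal{B}_2(\mathcal{H})$-limits defining the group derivatives can be pulled inside $\int\hat\phi(t)\,dt$, and that the iterated time integrals may be exchanged with the Fourier integral. Both rest on the isometry $\|e^{i\beta A}Xe^{i\gamma A}\|_2=\|X\|_2$, which collapses each Dyson term to the clean bound $|t|^{n}\|X\|_2^{n}/n!$, together with the weight $(1+|t|)^3$ of the hypothesis: the powers of $|t|$ manufactured by the nested time integrals are matched term by term by the three factors of $(1+|t|)$, so dominated convergence and Fubini apply at every stage through the second derivative.
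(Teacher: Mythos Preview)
Your argument is correct, and it is a genuinely different route from the paper's. You expand $e^{it(A+X)}$ as a full Dyson series and work throughout in the $\mathcal{B}_2(\mathcal{H})$-norm, using the unitary invariance $\|e^{i\beta A}Xe^{i\gamma A}\|_2=\|X\|_2$ to bound the $n$-th term by $|t|^{n}\|X\|_2^{n}/n!$; the first-order remainder is then a clean $O(\|X\|_2^{2})$ and the second-order remainder a clean $O(\|X\|_2^{3})$, each absorbed by the weight $(1+|t|)^3$ on $\hat\phi$. The paper instead stays in the operator norm: it writes only the first Duhamel step $e^{it(A+X)}-e^{itA}=i\int_0^t e^{i\beta(A+X)}Xe^{i(t-\beta)A}d\beta$, subtracts the candidate derivative, and controls the difference $e^{i\beta(A+X)}-e^{i\beta A}$ via the interpolation inequality $\|e^{i\beta(A+X)}-e^{i\beta A}\|\le 2^{1-\epsilon}(\beta\|X\|)^{\epsilon}$, obtaining a remainder of order $\|X\|^{1+\epsilon}$ against $\int|\hat\phi(t)|(1+|t|)^{2+\epsilon}dt$; the second derivative is then treated analogously starting from $D^{(1)}\phi(A+X)(Y)-D^{(1)}\phi(A)(Y)$. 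What you gain is a cleaner remainder bound, a transparent match between the order of the derivative and the required moment of $\hat\phi$, and no need for the interpolation trick. What the paper's route buys is slightly more: its estimates live in $\mathcal{B}(\mathcal{H})$, so it actually establishes operator-norm Fr\'echet differentiability for arbitrary bounded perturbations, not only for $X\in\mathcal{B}_2(\mathcal{H})$, and then specializes to $\mathcal{B}_2$ at the end. One small remark: your appeal to the symmetry $D^{(2)}\phi(A)(X,Y)=D^{(2)}\phi(A)(Y,X)$ to ``fix the placement of $X$ and $Y$'' is unnecessary---differentiating $s\mapsto D^{(1)}\phi(A+sY)(X)$ directly yields the two displayed inner integrals with the roles of $X$ and $Y$ exchanged relative to the paper's formula, which is exactly $D^{(2)}\phi(A)(Y,X)$ in the paper's convention; either you should note that this equals $D^{(2)}\phi(A)(X,Y)$ by the standard symmetry of second Fr\'echet derivatives, or simply differentiate $s\mapsto D^{(1)}\phi(A+sX)(Y)$ instead to land on the formula verbatim.
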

\begin{proof}
That $\phi(A)$ and the expressions on the right hand side above exist in $\mathcal{B}(\mathcal{H})$ are consequences of the functional calculus and the 
assumption on $\hat{\phi}$. Next
$$ \phi(A+X) - \phi(A) = \int_{-\infty}^{\infty} \hat{\phi}(t)\left[\textup{e}^{\textup{i}t(A+X)} - \textup{e}^{\textup{i}tA}\right]dt = \int_{-\infty}^{\infty} \hat{\phi}(t)dt\int_0^td\beta~ \textup{e}^{\textup{i}\beta (A+X)}\textup{i}X\textup{e}^{\textup{i}(t-\beta) A}.$$
Therefore
\begin{equation*}
\begin{split}
\phi(A+X) - \phi(A) - \textup{i}\int_{-\infty}^{\infty}\hat{\phi}(t)dt\int_0^td\beta ~e^{\textup{i}\beta A}Xe^{\textup{i}(t-\beta) A} \\
& \hspace{-5cm} =  \textup{i}\int_{-\infty}^{\infty} \hat{\phi}(t)dt\int_0^td\beta~ \left[\textup{e}^{\textup{i}\beta (A+X)}X\textup{e}^{\textup{i}(t-\beta)A} - \textup{e}^{\textup{i}\beta A}X\textup{e}^{\textup{i}(t-\beta)A}\right].
\end{split}
\end{equation*}
Using the interpolation inequality 
$$ \left\| \textup{e}^{\textup{i}\beta (A+X)} - \textup{e}^{\textup{i}\beta A}\right\| \leq 2^{(1-\epsilon)}~(\beta \|X\|)^{\epsilon} ~~(0\leq \epsilon \leq1), \quad \text{we get that} \quad$$
\begin{equation*}
\begin{split}
\left\|\phi(A+X) - \phi(A) - \textup{i}\int_{-\infty}^{\infty} \hat{\phi}(t)dt\int_0^td\beta ~\textup{e}^{\textup{i}\beta A}X\textup{e}^{\textup{i}(t-\beta) A} \right\|\\
& \hspace{-3cm} \leq \left(\frac{2^{(1-\epsilon)}}{\epsilon +1}\right) \|X\|^{\epsilon +1} \int_{-\infty}^{\infty} |\hat{\phi}(t)|~(1 + |t|)^{\epsilon +2} ~dt,
\end{split}
\end{equation*}
 which by virtue of the assumption on $\hat{\phi}$ implies that $D^{(1)}\phi(A)$ exist and that 
$$\left[D^{(1)}\phi(A)\right](X) = \textup{i}\int_{-\infty}^{\infty} \hat{\phi}(t)dt\int_0^td\beta ~\textup{e}^{\textup{i}\beta A}X\textup{e}^{\textup{i}(t-\beta) A}. \quad \text{Similarly for } \quad X,Y\in \mathcal{B}(\mathcal{H}),$$
\begin{equation*}
\begin{split}
\left[D^{(1)}\phi(A+X)\right](Y) - \left[D^{(1)}\phi(A)\right](Y) \\
& \hspace{-4cm} = \textup{i}^2\int_{-\infty}^{\infty} \hat{\phi}(t)dt\int_0^td\beta ~\{\int_0^{\beta}d\nu ~\textup{e}^{\textup{i}\nu (A+X)}X\textup{e}^{\textup{i}(\beta-\nu)A}Y\textup{e}^{\textup{i}(t-\beta)(A+X)}\\ 
& \hspace{1cm} + \int_0^{t-\beta}d\nu ~\textup{e}^{\textup{i}\beta A}Y\textup{e}^{\textup{i}\nu (A+X)}X\textup{e}^{\textup{i}(t-\beta-\nu)A} \}
\end{split}
\end{equation*}
and one can verify as before that
\begin{equation*}
\begin{split}
\|\left[D^{(1)}\phi(A+X)\right](Y) - \left[D^{(1)}\phi(A)\right](Y) \\
& \hspace{-4cm} - \textup{i}^2\int_{-\infty}^{\infty} \hat{\phi}(t)dt\int_0^td\beta ~\{\int_0^{\beta}d\nu ~\textup{e}^{\textup{i}\nu A}X\textup{e}^{\textup{i}(\beta-\nu)A}Y\textup{e}^{\textup{i}(t-\beta)A}\\ 
& \hspace{1cm} + \int_0^{t-\beta}d\nu ~\textup{e}^{\textup{i}\beta A}Y\textup{e}^{\textup{i}\nu A}X\textup{e}^{\textup{i}(t-\beta-\nu)A} \}\|
\end{split}
\end{equation*}
$$ \leq~ K \|X\|^{\epsilon +1} \|Y\| \int_{-\infty}^{\infty} |\hat{\phi}(t)|~(1 + |t|)^{\epsilon +2} ~dt\quad \text{(for some $\epsilon > 0$ and some constant} \quad K\equiv K(\epsilon)),$$
proving the expression for $\left[D^{(2)}\phi(A)\right](X,Y).$
~~~~~~~~~~~~~~~~~~~~~~~~~~~~~~~~~~~~~~~~~~~~~~~~~~~~~~~~~~~~~~~~~~~~~~~~~\end{proof}
\begin{thm}
Let $A$ be an unbounded self-adjoint operator in a Hilbert space $\mathcal{H}$, $V$ be a self-adjoint operator such that $V\in \mathcal{B}_3(\mathcal{H})$ 
and furthermore let $\phi \in \mathcal{S}(\mathbb{R})$ (the Schwartz class of smooth functions of rapid decrease). Then 
$$\phi(A+V) - \phi(A) -\left[D^{(1)}\phi(A)\right](V) - \frac{1}{2}\left[D^{(2)}\phi(A)\right](V,V) \in \mathcal{B}_1(\mathcal{H})\quad \text{and} \quad$$
\begin{equation*}
\begin{split}
\textup{Tr}\left[\phi(A+V) - \phi(A) -\left[D^{(1)}\phi(A)\right](V) - \frac{1}{2}\left[D^{(2)}\phi(A)\right](V,V) \right]\\
& \hspace{-10cm} = \int_{-\infty}^{\infty} \textup{i}^2 ~t\hat{\phi}(t)~dt \int_0^t d\nu \int_0^1 ds \int_0^s d\tau ~\textup{Tr}\left[ V\textup{e}^{\textup{i}(t-\nu)A_{\tau}}V\textup{e}^{\textup{i}\nu A_{\tau}} - V\textup{e}^{\textup{i}(t-\nu)A}V\textup{e}^{\textup{i}\nu A} \right],
\end{split}
\end{equation*}
where $A_{\tau} = A +\tau V$ and $0\leq \tau\leq1$.
\end{thm}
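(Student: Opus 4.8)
The plan is to reduce the whole statement to the single exponential $\phi_t(\lambda)=e^{\textup{i}t\lambda}$ and then integrate against $\hat\phi$. Writing $\phi(A)=\int_{-\infty}^{\infty}\hat\phi(t)e^{\textup{i}tA}\,dt$ (and likewise for $A+V$) and using the closed forms for $D^{(1)}\phi(A)$ and $D^{(2)}\phi(A)$ established in the preceding theorem of this section, the quantity of interest becomes $\int_{-\infty}^{\infty}\hat\phi(t)\,R(t)\,dt$, where
\[ R(t)=e^{\textup{i}t(A+V)}-e^{\textup{i}tA}-\big[D^{(1)}(e^{\textup{i}t\cdot})(A)\big](V)-\tfrac12\big[D^{(2)}(e^{\textup{i}t\cdot})(A)\big](V,V). \]
Thus it suffices to (a) evaluate $\textup{Tr}\,R(t)$ in closed form and (b) show $R(t)\in\mathcal{B}_1(\mathcal{H})$ with a bound good enough to interchange the trace with the $t$-integral.

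For (a) I would mimic Lemma \ref{lmma: bddexplmma} with the power $A_s^r$ replaced by the unitary $e^{\textup{i}tA_s}$, $A_s=A+sV$. Duhamel's formula gives $\frac{d}{ds}e^{\textup{i}tA_s}=\textup{i}\int_0^t e^{\textup{i}uA_s}Ve^{\textup{i}(t-u)A_s}\,du$, the exact analogue of $\frac{d}{ds}A_s^r=\sum_j A_s^{r-j-1}VA_s^{j}$, so that
\[ e^{\textup{i}t(A+V)}-e^{\textup{i}tA}-\big[D^{(1)}(e^{\textup{i}t\cdot})(A)\big](V)=\textup{i}\int_0^1\!ds\int_0^t\!du\int_0^s\!d\tau\,\frac{d}{d\tau}\big(e^{\textup{i}uA_\tau}Ve^{\textup{i}(t-u)A_\tau}\big). \]
A second Duhamel step expands $\frac{d}{d\tau}(\cdots)$ into the two families $\textup{i}\int_0^u e^{\textup{i}\nu A_\tau}Ve^{\textup{i}(u-\nu)A_\tau}Ve^{\textup{i}(t-u)A_\tau}d\nu$ and $\textup{i}\int_0^{t-u}e^{\textup{i}uA_\tau}Ve^{\textup{i}\nu A_\tau}Ve^{\textup{i}(t-u-\nu)A_\tau}d\nu$. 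Exactly as in the bounded proof, the frozen part ($A_\tau\to A$) of this double integral reproduces $\tfrac12 D^{(2)}(e^{\textup{i}t\cdot})(A)(V,V)$, since $\int_0^1 ds\int_0^s d\tau=\tfrac12$; subtracting it replaces the two families by their differences ``$A_\tau$ terms $-$ $A$ terms''.

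Next I would take the trace. By cyclicity, and because the $e^{\textup{i}\alpha A_\tau}$ mutually commute, both families collapse to the single function $F_\tau(\alpha):=\textup{Tr}\big[Ve^{\textup{i}\alpha A_\tau}Ve^{\textup{i}(t-\alpha)A_\tau}\big]$, which moreover obeys the symmetry $F_\tau(\alpha)=F_\tau(t-\alpha)$. The $u$-integration is then governed by the \emph{continuous} analogue of Lemma \ref{lmma: combinatory}: for any $G$ with $G(w)=G(t-w)$,
\[ \int_0^t\!du\Big(\int_0^u G(w)\,dw+\int_0^{t-u}G(w)\,dw\Big)=t\int_0^t G(w)\,dw, \]
the factor $t$ playing the role of $(n+1)=r$ in the discrete lemma. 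Applying this with $G=F_\tau-F_0$ gives
\[ \textup{Tr}\,R(t)=\textup{i}^2\,t\int_0^t\!d\nu\int_0^1\!ds\int_0^s\!d\tau\,\textup{Tr}\big[Ve^{\textup{i}(t-\nu)A_\tau}Ve^{\textup{i}\nu A_\tau}-Ve^{\textup{i}(t-\nu)A}Ve^{\textup{i}\nu A}\big], \]
which is precisely the claimed integrand.

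Finally, for (b) I would expand each difference once more by Duhamel, $e^{\textup{i}\alpha A_\tau}-e^{\textup{i}\alpha A}=\textup{i}\tau\int_0^\alpha e^{\textup{i}\beta A_\tau}Ve^{\textup{i}(\alpha-\beta)A}d\beta$, so that every surviving term carries three factors of $V$. Since $V\in\mathcal{B}_3(\mathcal{H})$, the Schatten--H\"older inequality $\|XYZ\|_1\le\|X\|_3\|Y\|_3\|Z\|_3$ shows $R(t)\in\mathcal{B}_1(\mathcal{H})$ with $\|R(t)\|_1\lesssim|t|^3\|V\|_3^3$. This polynomial bound, together with $\int|\hat\phi(t)|(1+|t|)^3\,dt<\infty$ for $\phi\in\mathcal{S}(\mathbb{R})$, makes $\int\hat\phi(t)R(t)\,dt$ a convergent $\mathcal{B}_1$-valued (Bochner) integral, legitimizes interchanging $\textup{Tr}$ with $\int dt$, and yields the stated formula. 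I expect the only real obstacle to be this analytic bookkeeping: the Duhamel differentiations in $s$ and $\tau$ involve the unbounded $A_\tau$, so to keep every step rigorous I would work throughout with the \emph{integrated} unitary forms (involving only the unitaries $e^{\textup{i}\alpha A_\tau}$ and $V$) and justify each Fubini interchange in $s,\tau,\nu,\beta,t$ by the uniform trace-norm estimates above, rather than by differentiating unbounded-operator products directly.
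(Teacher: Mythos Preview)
Your proposal is correct and follows essentially the same route as the paper: two applications of Duhamel's formula to produce the nested integrals, subtraction of the frozen ($A_\tau\to A$) term to account for $\tfrac12 D^{(2)}\phi(A)(V,V)$, the $\mathcal{B}_1$-estimate via a third Duhamel step and the Schatten--H\"older inequality, and then cyclicity of the trace together with the identity $\int_0^t d\beta\big(\int_0^\beta+\int_0^{t-\beta}\big)G=t\int_0^t G$ for $G(w)=G(t-w)$ to extract the factor $t$. The paper carries out exactly these steps (your ``continuous analogue of Lemma~\ref{lmma: combinatory}'' is what the paper invokes as ``a change of variable and the cyclicity of trace'' just before equation~\eqref{eq: schfrechexpr}), so there is no substantive difference between your argument and theirs.
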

\begin{proof}
Since $ [0,1]\ni s \longrightarrow \textup{e}^{\textup{i}tA_s}$ is $\mathcal{B}_3(\mathcal{H})$-continuously differentiable, uniformly in $t$,

\begin{equation*}
\begin{split}
\phi(A+V) - \phi(A) -\left[D^{(1)}\phi(A)\right](V) = \int_{-\infty}^{\infty} \hat{\phi}(t)~dt \left[\textup{e}^{\textup{i}t(A+v)} - \textup{e}^{\textup{i}tA}\right] - \left[D^{(1)}\phi(A)\right](V)\\
& \hspace{-14cm} = \int_{-\infty}^{\infty} \hat{\phi}(t)~dt\int_0^1ds \frac{d}{ds}(\textup{e}^{\textup{i}tA_s}) - \textup{i}\int_{-\infty}^{\infty} \hat{\phi}(t)~dt\int_0^td\beta ~\textup{e}^{\textup{i}\beta A}V\textup{e}^{\textup{i}(t-\beta) A}\\
& \hspace{-14cm} = \int_{-\infty}^{\infty} \hat{\phi}(t)~dt \left[\int_0^1ds \int_0^td\beta ~\textup{e}^{\textup{i}\beta A_s}\textup{i}V\textup{e}^{\textup{i}(t-\beta) A_s} - \textup{i}\int_0^1ds \int_0^td\beta ~\textup{e}^{\textup{i}\beta A}V\textup{e}^{\textup{i}(t-\beta)A} \right]\\
\end{split}
\end{equation*}

\begin{equation}
\begin{split}
 \hspace{1cm} = \int_{-\infty}^{\infty} \textup{i} \hat{\phi}(t)~dt \int_0^1ds \int_0^td\beta \left[ \textup{e}^{\textup{i}\beta A_s}V\textup{e}^{\textup{i}(t-\beta) A_s} - \textup{e}^{\textup{i}\beta A}V\textup{e}^{\textup{i}(t-\beta)A} \right]\\
& \hspace{-10.8cm} = \int_{-\infty}^{\infty} \textup{i} \hat{\phi}(t) ~Q(t)~ dt,\quad \text{where} \quad Q(t) \equiv \int_0^1ds \int_0^td\beta \left[ \textup{e}^{\textup{i}\beta A_s}V\textup{e}^{\textup{i}(t-\beta) A_s} - \textup{e}^{\textup{i}\beta A}V\textup{e}^{\textup{i}(t-\beta)A} \right].
\end{split}
\end{equation}
As before, $\tau \in[0,1] \longrightarrow \textup{e}^{\textup{i}\beta A_{\tau}}V\textup{e}^{\textup{i}(t-\beta) A_{\tau}}\in \mathcal{B}_3(\mathcal{H})$ is $\mathcal{B}_{\frac{3}{2}}(\mathcal{H})-$
continuously differentiable, uniformly with respect to $\beta$. Then 
\begin{equation*}
\begin{split}
Q(t) = \int_0^1ds \int_0^td\beta \int_0^sd\tau \frac{d}{d\tau}\left( \textup{e}^{\textup{i}\beta A_{\tau}}V\textup{e}^{\textup{i}(t-\beta) A_{\tau}} \right)\\
& \hspace{-8.5cm} = \textup{i} \int_0^1ds \int_0^sd\tau \int_0^td\beta ~\{\int_0^{\beta}d\nu \textup{e}^{\textup{i}\nu A_{\tau}}V\textup{e}^{\textup{i}(\beta-\nu)A_{\tau}}V\textup{e}^{\textup{i}(t-\beta)A_{\tau}}
+ \int_0^{t-\beta}d\nu \textup{e}^{\textup{i}\beta A_{\tau}}V\textup{e}^{\textup{i}\nu A_{\tau}}V\textup{e}^{\textup{i}(t-\beta-\nu)A_{\tau}}\},
\end{split}
\end{equation*}
where Fubini's theorem has been used to interchange the order of integration. Hence
\begin{equation}\label{eq: diffsecnottrace}
\begin{split}
\phi(A+V) - \phi(A) -\left[D^{(1)}\phi(A)\right](V) - \frac{1}{2}\left[D^{(2)}\phi(A)\right](V,V) \\
& \hspace{-10.5cm} = \int_{-\infty}^{\infty} \textup{i}^2 \hat{\phi}(t)dt \int_0^1ds \int_0^sd\tau \int_0^td\beta \{ \int_0^{\beta}d\nu \left[ \textup{e}^{\textup{i}\nu A_{\tau}}V\textup{e}^{\textup{i}(\beta-\nu)A_{\tau}}Ve^{\textup{i}(t-\beta)A_{\tau}} -\textup{e}^{\textup{i}\nu A}V\textup{e}^{\textup{i}(\beta-\nu)A}V\textup{e}^{\textup{i}(t-\beta)A}\right]\\
& \hspace{-5cm}  + \int_0^{t-\beta}d\nu \left[ \textup{e}^{\textup{i}\beta A_{\tau}}V\textup{e}^{\textup{i}\nu A_{\tau}}V\textup{e}^{\textup{i}(t-\beta-\nu)A_{\tau}} -\textup{e}^{\textup{i}\beta A}V\textup{e}^{\textup{i}\nu A}V\textup{e}^{\textup{i}(t-\beta-\nu)A}\right]\}\\
\end{split}
\end{equation}
Though each of the four individual terms in the integral in \eqref{eq: diffsecnottrace} belong to $\mathcal{B}_{\frac{3}{2}}(\mathcal{H})$, each of the 
differences in parenthesis $[.]$ belong to $\mathcal{B}_1(\mathcal{H})$, e.g.
\begin{equation*}
\begin{split}
\left[ \textup{e}^{\textup{i}\nu A_{\tau}}V\textup{e}^{\textup{i}(\beta-\nu)A_{\tau}}V\textup{e}^{\textup{i}(t-\beta)A_{\tau}} -\textup{e}^{\textup{i}\nu A}V\textup{e}^{\textup{i}(\beta-\nu)A}V\textup{e}^{\textup{i}(t-\beta)A}\right]\\
& \hspace{-8cm} = \left[\textup{e}^{\textup{i}\nu A_{\tau}}-\textup{e}^{\textup{i}\nu A}\right]Ve^{\textup{i}(\beta-\nu)A_{\tau}}V\textup{e}^{\textup{i}(t-\beta)A_{\tau}} + \textup{e}^{\textup{i}\nu A}V\left[\textup{e}^{\textup{i}(\beta-\nu)A_{\tau}}-\textup{e}^{\textup{i}(\beta-\nu)A_{\tau}}\right]V\textup{e}^{\textup{i}(t-\beta)A_{\tau}}\\
& \hspace{-4cm} + \textup{e}^{\textup{i}(\beta-\nu)A_{\tau}}V\left[\textup{e}^{\textup{i}(t-\beta)A_{\tau}}-\textup{e}^{\textup{i}(t-\beta)A_{\tau}}\right] \in \mathcal{B}_1(\mathcal{H}),
\end{split}
\end{equation*}
and its norm $\|[.]\|_1 ~\leq ~|\tau|~\|V\|_3^3 ~\{|\nu| + |\beta -\nu| + |t-\beta| \},$ where we have used the estimate: $\left\|\textup{e}^{\textup{i}\nu A_{\tau}}-\textup{e}^{\textup{i}\nu A}\right\|_3 \leq |\nu \tau| \|V\|_3$.
Hence by the hypothesis on $\hat \phi$,
$$ \phi(A+V) - \phi(A) -\left[D^{(1)}\phi(A)\right](V) - \frac{1}{2}\left[D^{(2)}\phi(A)\right](V,V) \in \mathcal{B}_1(\mathcal{H})\quad \text{and} \quad$$
\begin{equation}\label{eq: secondifftrace}
\begin{split}
\mathcal{Z} \equiv \textup{Tr}\left[\phi(A+V) - \phi(A) -\left[D^{(1)}\phi(A)\right](V) - \frac{1}{2}\left[D^{(2)}\phi(A)\right](V,V)\right]  \\
& \hspace{-12cm} = \int_{-\infty}^{\infty} \textup{i}^2 \hat{\phi}(t)dt \int_0^1ds \int_0^sd\tau \int_0^td\beta ~\{ \int_0^{\beta}d\nu \textup{Tr} \left[ \textup{e}^{\textup{i}\nu A_{\tau}}V\textup{e}^{\textup{i}(\beta-\nu)A_{\tau}}V\textup{e}^{\textup{i}(t-\beta)A_{\tau}} -\textup{e}^{\textup{i}\nu A}V\textup{e}^{\textup{i}(\beta-\nu)A}V\textup{e}^{\textup{i}(t-\beta)A}\right]\\
& \hspace{-6.5cm}  + \int_0^{t-\beta}d\nu \textup{Tr} \left[ \textup{e}^{\textup{i}\beta A_{\tau}}V\textup{e}^{\textup{i}\nu A_{\tau}}V\textup{e}^{\textup{i}(t-\beta-\nu)A_{\tau}} -\textup{e}^{\textup{i}\beta A}V\textup{e}^{\textup{i}\nu A}V\textup{e}^{\textup{i}(t-\beta-\nu)A}\right]\}.
\end{split}
\end{equation}
Again by the cyclicity of trace and a change of variable, the first integral in $\{.\}$ in \eqref{eq: secondifftrace} is equal to 
\begin{equation}\label{eq: 1stintegral}
\begin{split}
\int_0^{\beta}d\nu~ \textup{Tr} \left[ \textup{e}^{\textup{i}(t-\beta +\nu) A_{\tau}}V\textup{e}^{\textup{i}(\beta-\nu)A_{\tau}}V -\textup{e}^{\textup{i}(t-\beta +\nu) A}V\textup{e}^{\textup{i}(\beta-\nu)A}V\right]\\
& \hspace{-6cm} = \int_0^{\beta}d\nu ~\textup{Tr} \left[ V\textup{e}^{\textup{i}(t- \nu) A_{\tau}}V\textup{e}^{\textup{i}\nu A_{\tau}} -V\textup{e}^{\textup{i}(t- \nu) A}V\textup{e}^{\textup{i}\nu A}\right].
\end{split}
\end{equation}
Similarly, the second integral in $\{.\}$ in \eqref{eq: secondifftrace} is equal to 
\begin{equation}\label{eq: secondintegral}
\hspace{0cm}  \int_0^{t-\beta}d\nu ~\textup{Tr} \left[ V\textup{e}^{\textup{i}(t- \nu) A_{\tau}}V\textup{e}^{\textup{i}\nu A_{\tau}} -V\textup{e}^{\textup{i}(t- \nu) A}V\textup{e}^{\textup{i}\nu A}\right].
\end{equation}
Combining \eqref{eq: 1stintegral} and \eqref{eq: secondintegral}, we conclude that 
\begin{equation}\label{eq: mathcalz}
\begin{split}
\mathcal{Z} = \int_{-\infty}^{\infty} \textup{i}^2 \hat{\phi}(t)dt \int_0^1ds \int_0^sd\tau \int_0^td\beta ~\{ \int_0^{\beta}d\nu ~\textup{Tr} \left[ V\textup{e}^{\textup{i}(t- \nu) A_{\tau}}V\textup{e}^{\textup{i}\nu A_{\tau}} -V\textup{e}^{\textup{i}(t- \nu) A}V\textup{e}^{\textup{i}\nu A}\right]\\
& \hspace{-8cm}  + \int_0^{t-\beta}d\nu ~\textup{Tr} \left[ V\textup{e}^{\textup{i}(t- \nu) A_{\tau}}V\textup{e}^{\textup{i}\nu A_{\tau}} -V\textup{e}^{\textup{i}(t- \nu) A}V\textup{e}^{\textup{i}\nu A}\right]\}.
\end{split}
\end{equation}
By a change of variable and the cyclicity of trace, we get that
\begin{equation*}
\begin{split} 
t \int_0^td\nu ~\textup{Tr} \left[ V\textup{e}^{\textup{i}(t- \nu) A_{\tau}}V\textup{e}^{\textup{i}\nu A_{\tau}} -V\textup{e}^{\textup{i}(t- \nu) A}V\textup{e}^{\textup{i}\nu A}\right]\\
& \hspace{-6cm} = \int_0^t d\beta ~\{ \int_0^{\beta}d\nu ~\textup{Tr} \left[ V\textup{e}^{\textup{i}(t- \nu) A_{\tau}}V\textup{e}^{\textup{i}\nu A_{\tau}} -V\textup{e}^{\textup{i}(t- \nu) A}V\textup{e}^{\textup{i}\nu A}\right]\\
& \hspace{-3cm} +  \int_0^{t-\beta} d\nu ~\textup{Tr} \left[ V\textup{e}^{\textup{i}(t- \nu) A_{\tau}}V\textup{e}^{\textup{i}\nu A_{\tau}} -V\textup{e}^{\textup{i}(t- \nu) A}V\textup{e}^{\textup{i}\nu A}\right]\},
\end{split}
\end{equation*}
using which in \eqref{eq: mathcalz} we are lead to the equation 
\begin{equation}\label{eq: schfrechexpr}
\mathcal{Z} = \int_{-\infty}^{\infty} \textup{i}^2 ~t \hat{\phi}(t)dt\int_0^td\nu \int_0^1ds \int_0^sd\tau  ~\textup{Tr} \left[ V\textup{e}^{\textup{i}(t- \nu) A_{\tau}}V\textup{e}^{\textup{i}\nu A_{\tau}} -V\textup{e}^{\textup{i}(t- \nu) A}V\textup{e}^{\textup{i}\nu A}\right],
\end{equation}
by an application of Fubini's theorem.
~~~~~~~~~~~~~~~~~~~~~~~~~~~~~~~~~~~~~~~~~~~~~~~~~~~~~~~~~~~~~~~~~~~~~~~~~~~~~~~~~~~~~~~~~~~~~~\end{proof}
\begin{thm}\label{thm: mainunbthm}
Let $A$ be an unbounded self-adjoint operator in a Hilbert space $\mathcal{H}$ with $\sigma(A) \subseteq [b,\infty)$ for some $b\in \mathbb{R}$ and $V$ be a self-adjoint operator such that $V\in \mathcal{B}_2(\mathcal{H})$. 
Then there exist a unique real-valued function $\eta\in L^1\left(\mathbb{R},\frac{d\lambda}{(1+\lambda^2)^{1+\epsilon}}\right)$ (for some $\epsilon > 0$) such that for every $\phi \in \mathcal{S}(\mathbb{R})$ (the Schwartz class of smooth functions of rapid decrease)
$$\textup{Tr}\left[\phi(A+V) - \phi(A) -\left[D^{(1)}\phi(A)\right](V) - \frac{1}{2}\left[D^{(2)}\phi(A)\right](V,V)\right] = \int_{-\infty}^{\infty} \phi'''(\lambda)\eta(\lambda)d\lambda .$$
\end{thm}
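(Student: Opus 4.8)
\noindent The plan is to convert the Fourier-type identity \eqref{eq: schfrechexpr}, already available for $V\in\mathcal{B}_3(\mathcal{H})$, into a Birman--Solomyak double operator integral and then run the very same integration-by-parts scheme as in the bounded case (Theorem \ref{thm: mainbddthm} and its Corollary), the only new feature being that the spectra now fill the half-line $[b-\|V\|,\infty)$ rather than a compact interval. Starting from \eqref{eq: schfrechexpr}, I would insert the spectral resolutions $\textup{Tr}[Ve^{\textup{i}(t-\nu)A_\tau}Ve^{\textup{i}\nu A_\tau}]=\int\!\!\int e^{\textup{i}(t-\nu)\lambda+\textup{i}\nu\mu}\,\textup{Tr}[VE_\tau(d\lambda)VE_\tau(d\mu)]$, carry out the inner integral $\int_0^t e^{\textup{i}(t-\nu)\lambda+\textup{i}\nu\mu}d\nu=\frac{e^{\textup{i}t\mu}-e^{\textup{i}t\lambda}}{\textup{i}(\mu-\lambda)}$, and then integrate in $t$ using $\int t\hat\phi(t)e^{\textup{i}t\xi}dt=-\textup{i}\,\phi'(\xi)$. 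After a Fubini step (justified by the rapid decay of $\hat\phi$ and the finiteness recorded below) this collapses \eqref{eq: schfrechexpr} to
\[
\mathcal{Z}=\int_0^1\!ds\int_0^s\!d\tau\int\!\!\int\frac{\phi'(\lambda)-\phi'(\mu)}{\lambda-\mu}\,\textup{Tr}\!\left[VE_\tau(d\lambda)VE_\tau(d\mu)-VE(d\lambda)VE(d\mu)\right],
\]
the exact analogue of \eqref{eq: bddcor4} with the divided difference of $\phi'$ in place of that of $h$.

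\noindent The candidate $\eta$ is then read off by the device of the bounded proof. Writing $\phi''(\xi)=\int_{-\infty}^{\xi}\phi'''(x)\,dx$ (legitimate for $\phi\in\mathcal{S}(\mathbb{R})$) and inserting it into $\frac{\phi'(\lambda)-\phi'(\mu)}{\lambda-\mu}=\int_0^1\phi''\!\big((1-r)\mu+r\lambda\big)dr$, a Fubini in $(r,x)$ gives $\frac{\phi'(\lambda)-\phi'(\mu)}{\lambda-\mu}=\int\phi'''(x)K(x;\lambda,\mu)\,dx$, where $K(x;\lambda,\mu)$ is the Lebesgue measure of $\{r\in[0,1]:(1-r)\mu+r\lambda>x\}$, so $0\le K\le1$. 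Substituting and interchanging integrations identifies
\[
\eta(x)=\int_0^1\!ds\int_0^s\!d\tau\int\!\!\int K(x;\lambda,\mu)\,\textup{Tr}\!\left[VE_\tau(d\lambda)VE_\tau(d\mu)-VE(d\lambda)VE(d\mu)\right].
\]
The decisive elementary fact is that $\Delta\times\delta\mapsto\textup{Tr}[VE_\tau(\Delta)VE_\tau(\delta)]=\|E_\tau(\delta)VE_\tau(\Delta)\|_2^2$ is a \emph{positive} measure of total mass $\textup{Tr}(V^2)=\|V\|_2^2$; hence the signed measure obtained by subtracting the $\tau=0$ term and integrating in $(s,\tau)$ has total variation at most $\|V\|_2^2$. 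Since $0\le K\le1$, this makes $\eta$ real-valued and bounded by $\|V\|_2^2$, and because $K\equiv1$ when $x$ lies below both spectra while the two measures have equal mass, $\eta$ vanishes on $(-\infty,b-\|V\|)$. A bounded function supported on a half-line automatically lies in $L^1\!\big(\mathbb{R},(1+\lambda^2)^{-1-\epsilon}d\lambda\big)$ for every $\epsilon>0$, which is exactly the asserted membership.

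\noindent To make this rigorous in the style of Theorem \ref{thm: mainbddthm}, rather than invoking positivity abstractly, I would decompose $V=V_1\oplus V_2=V_{1\tau}\oplus V_{2\tau}$ with respect to $\mathcal{M}_{(A+\textup{i})^{-1}}$ and $\mathcal{M}_{(A_\tau+\textup{i})^{-1}}$ using Lemma \ref{lmma: decomp}(vi)(b) and the continuity of $\tau\mapsto V_{1\tau},V_{2\tau}$ from Lemma \ref{lmma: decomp}(vii)(b); approximate $V_{2\tau}$ by $A_\tau Y^{(n)}-Y^{(n)}A_\tau$ with skew-adjoint $Y^{(n)}$ exactly as before; and integrate by parts twice in $\lambda$ as in the passage from \eqref{eq: intbyparts} to \eqref{eq: longexpre}. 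This produces approximants $\eta^{(n)}$ and, through the Birman--Solomyak bound $\big|\textup{Tr}(V_2[\,h(A),Y_0^{(n)}-Y_0^{(m)}])\big|\le\|h\|_{\textup{Lip}}\|V\|_2\|V_2^{(n)}-V_2^{(m)}\|_2$ applied with $h=\phi'$, a Cauchy estimate of the form
\[
\Big|\int\phi'''(\lambda)\big(\eta^{(n)}(\lambda)-\eta^{(m)}(\lambda)\big)d\lambda\Big|\le C\,\|\phi''\|_{\infty}\,\|V\|_2\big(\|V_2^{(n)}-V_2^{(m)}\|_2+\|V_{2\tau}^{(n)}-V_{2\tau}^{(m)}\|_2\big),
\]
now read against the weight. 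The identical computation, rerun with a $\|\cdot\|_2$-approximation of $V$ itself by finite-rank $V_n\in\mathcal{B}_3(\mathcal{H})$, transfers the whole identity from $\mathcal{B}_3(\mathcal{H})$ to $\mathcal{B}_2(\mathcal{H})$ and simultaneously yields trace-class membership of the remainder.

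\noindent I expect the one genuinely delicate point to be precisely this control at the unbounded end: verifying that the boundary terms generated by the now semi-infinite integrations by parts vanish (they do, by the Schwartz decay of $\phi',\phi'',\phi'''$) and that the iterated $s,\tau,\lambda$-integrals converge against $(1+\lambda^2)^{-1-\epsilon}$ uniformly in $n$, so that $\{\eta^{(n)}\}$ is Cauchy in the weighted $L^1$ space. Uniqueness is comparatively soft: if $\int\phi'''(\lambda)\eta(\lambda)\,d\lambda=0$ for all $\phi\in\mathcal{S}(\mathbb{R})$ then $\eta'''=0$ in the distributional sense, so $\eta$ agrees a.e.\ with a polynomial of degree at most two; but the construction forces $\eta$ to vanish on $(-\infty,b-\|V\|)$, and a polynomial vanishing on a half-line is identically zero, giving $\eta=0$ a.e.
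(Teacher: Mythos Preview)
Your primary route---observing that $\textup{Tr}[VE_\tau(\Delta)VE_\tau(\delta)]=\|E_\tau(\Delta)VE_\tau(\delta)\|_2^2$ defines a \emph{positive} measure of mass $\|V\|_2^2$ and reading off $\eta$ through the kernel $K(x;\lambda,\mu)$---is genuinely different from the paper's argument and, for existence, boundedness and real-valuedness of $\eta$, cleaner. The paper instead carries the decomposition $V=V_{1\tau}\oplus V_{2\tau}$ from Lemma~\ref{lmma: decomp}(vi)(b) through, approximates $V_{2\tau}$, integrates by parts twice, and shows the resulting $\eta^{(n)}$ are Cauchy in the weighted $L^1$ space; what that longer route buys is the explicit duality formula of Corollary~\ref{cor: 1}, which the paper then uses for uniqueness.

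Two concrete problems in your write-up. First, in your backup decomposition approach you propose to approximate $V_{2\tau}$ by $A_\tau Y^{(n)}-Y^{(n)}A_\tau$ ``exactly as before''; but $A_\tau$ is now unbounded, so this commutator need not lie in $\mathcal{B}_2(\mathcal{H})$ or even in $\mathcal{B}(\mathcal{H})$. The paper avoids this by decomposing relative to the \emph{resolvent}: $V_{2\tau}^{(n)}=(A_\tau-\textup{i})^{-1}Y^{(n)}-Y^{(n)}(A_\tau-\textup{i})^{-1}$, and then works with $\tilde Y^{(n)}=(A_\tau-\textup{i})^{-1}Y^{(n)}(A_\tau-\textup{i})^{-1}$; the algebra $(\lambda-\textup{i})^{-1}-(\mu-\textup{i})^{-1}=-(\lambda-\mu)(\lambda-\textup{i})^{-1}(\mu-\textup{i})^{-1}$ is what makes the divided difference reappear. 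Second, your uniqueness argument does not close: from $\int\phi'''\tilde\eta=0$ for all $\phi\in\mathcal{S}(\mathbb{R})$ you correctly get that $\tilde\eta=\eta_1-\eta_2$ agrees a.e.\ with a polynomial of degree at most two, but you cannot then invoke vanishing on a half-line---only your \emph{constructed} $\eta$ has that property, not an arbitrary competitor in the weighted class. And a linear polynomial (indeed a quadratic once $\epsilon>1/2$) does belong to $L^1\!\big(\mathbb{R},(1+\lambda^2)^{-1-\epsilon}d\lambda\big)$, so membership alone does not kill $\tilde\eta$. The paper's uniqueness instead runs through Corollary~\ref{cor: 1}, which determines $\int f\psi\eta$ for every $f\in L^\infty(\mathbb{R})$ and hence $\eta$ itself. (A minor point: the $\mathcal{B}_3\to\mathcal{B}_2$ approximation you propose is unnecessary, since $\mathcal{B}_2(\mathcal{H})\subset\mathcal{B}_3(\mathcal{H})$ and \eqref{eq: schfrechexpr} therefore applies directly.)
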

\begin{proof}
Equation \eqref{eq: schfrechexpr}, after an application of Fubini's theprem, yields that
\begin{equation}\label{eq : totalexp}
\begin{split} 
\mathcal{Z} \equiv \textup{Tr}\left[\phi(A+V) - \phi(A) -\left[D^{(1)}\phi(A)\right](V) - \frac{1}{2}\left[D^{(2)}\phi(A)\right](V,V)\right]\\
& \hspace{-10.2cm} = \int_0^1ds \int_0^sd\tau \int_{-\infty}^{\infty} \textup{i}^2 ~t \hat{\phi}(t)dt\int_0^td\nu ~\textup{Tr} \left[ V\textup{e}^{\textup{i}(t- \nu) A_{\tau}}V\textup{e}^{\textup{i}\nu A_{\tau}} -V\textup{e}^{\textup{i}(t- \nu) A}V\textup{e}^{\textup{i}\nu A}\right].
\end{split}
\end{equation}
Now
\begin{equation}\label{eq: specthmeq}
\begin{split} 
\int_0^td\nu ~\textup{Tr} \left[ V\textup{e}^{\textup{i}(t- \nu) A_{\tau}}V\textup{e}^{\textup{i}\nu A_{\tau}} -V\textup{e}^{\textup{i}(t- \nu) A}V\textup{e}^{\textup{i}\nu A}\right]\\
& \hspace{-5cm} =  \int_0^td\nu \int_{a}^{\infty} \int_{a}^{\infty} \textup{e}^{\textup{i}(t-\nu)\lambda}\textup{e}^{\textup{i}\nu \mu}~\textup{Tr} \left[VE_{\tau}(d\lambda)VE_{\tau}(d\mu) -VE(d\lambda)VE(d\mu)\right],
\end{split}
\end{equation}
where $a =\min\{ b, ~\inf{\sigma(A_{\tau})} ~[0 < \tau\leq 1]\}$ and $E_{\tau}(.)$ and $E(.)$ are the spectral families of the operator $A_{\tau}$ and $A$ respectively and the measure $ \mathcal{G}: \Delta \times \delta \subseteq Borel(\mathbb{R}^2)  \longrightarrow \textup{Tr} \left[VE_{\tau}(\Delta)VE_{\tau}(\delta) -VE(\Delta)VE(\delta)\right]$
is a complex measure with total variation  $\leq  2\|V\|_2^2$ and hence by Fubini's theorem  the right hand side expression in  \eqref{eq: specthmeq} is 
equal to  
\begin{equation}\label{eq: trv1v2eq}
\begin{split} 
 \int_{a}^{\infty}\int_{a}^{\infty} ~\int_0^td\nu ~\textup{e}^{\textup{i}(t-\nu)\lambda}\textup{e}^{\textup{i}\nu \mu}~\textup{Tr} \left[VE_{\tau}(d\lambda)VE_{\tau}(d\mu) -VE(d\lambda)VE(d\mu)\right]\\
& \hspace{-12.5cm} =  \int_{a}^{\infty}\int_{a}^{\infty} ~\frac{\textup{e}^{\textup{i} t\lambda} - \textup{e}^{\textup{i} t\mu}}{\textup{i}(\lambda-\mu)} ~~\textup{Tr} \left[VE_{\tau}(d\lambda)VE_{\tau}(d\mu) -VE(d\lambda)VE(d\mu)\right]\\
& \hspace{-12.5cm} = \int_0^1ds \int_0^sd\tau \int_{-\infty}^{\infty} \textup{i}^2 ~t \hat{\phi}(t)dt \int_{a}^{\infty} t\textup{e}^{\textup{i}t\lambda} ~\textup{Tr}\left[ V_{1\tau}^2E_{\tau}(d\lambda) - V_1^2E(d\lambda)\right]\\
& \hspace{-11.8cm} + \int_0^1ds \int_0^sd\tau \int_{-\infty}^{\infty} \textup{i}^2 ~t \hat{\phi}(t)dt \int_{a}^{\infty}\int_{a}^{\infty} \frac{\textup{e}^{\textup{i} t\lambda} - \textup{e}^{\textup{i} t\mu}}{\textup{i}(\lambda-\mu)} ~\textup{Tr} \left[V_{2\tau}E_{\tau}(d\lambda)V_{2\tau}E_{\tau}(d\mu) -V_2E(d\lambda)V_2E(d\mu)\right],
\end{split}
\end{equation}
where we have set $V = V_1 \oplus V_2 = V_{1\tau} \oplus V_{2\tau} \in \mathcal{B}_2(\mathcal{H})$  as in Lemma \ref{lmma: decomp} $vi(b)$.
Applying Fubini's theorem in the first expression in right hand side of  \eqref{eq: trv1v2eq}, we conclude that
\begin{equation*}
\begin{split} 
\int_0^1ds \int_0^sd\tau  \int_{-\infty}^{\infty}~\textup{i}^2 ~t \hat{\phi}(t) dt \int_{a}^{\infty} ~t\textup{e}^{\textup{i}t\lambda}\textup{Tr}\left[ V_{1\tau}^2E_{\tau}(d\lambda) - V_1^2E(d\lambda)\right]\\  
& \hspace{-8cm} = \int_0^1ds \int_0^sd\tau \int_{a}^{\infty} \phi''(\lambda)~\textup{Tr}\left[ V_{1\tau}^2E_{\tau}(d\lambda)-V_1^2E(d\lambda)\right]\\
& \hspace{-8cm} = \int_0^1ds \int_0^sd\tau \int_{a}^{\infty} \phi'''(\lambda)~\textup{Tr}\left[ V_1^2E(\lambda)-V_{1\tau}^2E_{\tau}(\lambda)\right]d\lambda,
\end{split}
\end{equation*}
where we have integrated by-parts and observed that the boundary term vanishes. Thus the first term in \eqref{eq: trv1v2eq} is  equal to 
\begin{equation}\label{eq: firsteta1eq}
\int_0^1ds \int_0^s d\tau \int_{a}^{\infty} \phi'''(\lambda) ~\eta_{1\tau}(\lambda) ~d\lambda, \quad \text{where} \quad \eta_{1\tau}(\lambda) = \textup{Tr}\left[ V_1^2E(\lambda)-V_{1\tau}^2E_{\tau}(\lambda)\right].
\end{equation}
Now consider the second expression in the right hand side of \eqref{eq: trv1v2eq} :
\begin{equation}\label{eq: secmaineq}
\int_0^1ds \int_0^sd\tau \int_{-\infty}^{\infty}\textup{i}^2 ~t \hat{\phi}(t)dt \int_{a}^{\infty}\int_{a}^{\infty} \frac{\textup{e}^{\textup{i} t\lambda} - \textup{e}^{\textup{i} t\mu}}{\textup{i}(\lambda-\mu)} ~\textup{Tr} \left[V_{2\tau}E_{\tau}(d\lambda)V_{2\tau}E_{\tau}(d\mu) -V_2E(d\lambda)V_2E(d\mu)\right]
\end{equation}
Since $V_{2\tau}\in \left[\textup{Ker} \left(\mathcal{M}_{\left(A_{\tau}+\textup{i}\right)^{-1}}\right)\right]^{\perp} = \overline{\textup{Ran} \left(\mathcal{M}_{\left(A_{\tau}+\textup{i}\right)^{-1}}^*\right)} = \overline{\textup{Ran} \left(\mathcal{M}_{\left(A_{\tau} - \textup{i}\right)^{-1}}\right)}$ , there exists a sequence
$\{V_{2\tau}^{(n)}\} \subseteq \textup{Ran} \left(\mathcal{M}_{\left(A_{\tau} - \textup{i}\right)^{-1}}\right)$ i.e. $V_{2\tau}^{(n)} = \left(A_{\tau} - \textup{i}\right)^{-1} Y^{(n)} - Y^{(n)} \left(A_{\tau} - \textup{i}\right)^{-1}$ 
for some $Y^{(n)} \in \mathcal{B}_2(\mathcal{H})$  such that $V_{2\tau}^{(n)} \longrightarrow V_{2\tau}$ in $\|.\|_2$. Similarly, there exists a sequence
$ \{V_{2}^{(n)}\} \subseteq \textup{Ran} \left(\mathcal{M}_{\left(A - \textup{i}\right)^{-1}}\right)$ i.e. $V_{2}^{(n)} = \left(A - \textup{i}\right)^{-1} Y_0^{(n)} - Y_0^{(n)} \left(A - \textup{i}\right)^{-1}$ 
for $Y_0^{(n)} \in \mathcal{B}_2(\mathcal{H})$  such that $V_{2}^{(n)} \longrightarrow V_{2}$ in $\|.\|_2$. Furthermore, by lemma \ref{lmma: decomp} $(vii)(b),$
the map $[0,1]\ni \tau \longrightarrow V_{1\tau},~V_{2\tau}$ are continuous. Thus the expression in \eqref{eq: secmaineq} is equal to 
\begin{equation}\label{eq: thirdmaineq}
\begin{split} 
\int_0^1ds \int_0^sd\tau \int_{-\infty}^{\infty} \textup{i}^2 ~t \hat{\phi}(t)dt \int_{a}^{\infty}\int_{a}^{\infty} \frac{\textup{e}^{\textup{i} t\lambda} - \textup{e}^{\textup{i} t\mu}}{\textup{i}(\lambda-\mu)} \lim_{n\rightarrow \infty}~\textup{Tr} \left[V_{2\tau}E_{\tau}(d\lambda)V_{2\tau}^{(n)}E_{\tau}(d\mu) -V_2E(d\lambda)V_2^{(n)}E(d\mu)\right]\\
& \hspace{-18.3cm} = \int_0^1ds \int_0^sd\tau \int_{-\infty}^{\infty} \textup{i}^2 ~t \hat{\phi}(t)dt \lim_{n\rightarrow \infty} \int_{a}^{\infty}\int_{a}^{\infty} \frac{\textup{e}^{\textup{i} t\lambda} - \textup{e}^{\textup{i} t\mu}}{\textup{i}(\lambda-\mu)} ~\textup{Tr} \left[V_{2\tau}E_{\tau}(d\lambda)V_{2\tau}^{(n)}E_{\tau}(d\mu) -V_2E(d\lambda)V_2^{(n)}E(d\mu)\right],
\end{split}
\end{equation}
since $Var\left(\mathcal{G}_2^{(n)} -\mathcal{G}_2\right) \leq ~\|V_{2\tau}\|_2 ~\|V_{2\tau}^{(n)} - V_{2\tau}\|_2 + \|V\|_2 ~\|V_2 - V_2^{(n)}\| \longrightarrow 0$
as $n\longrightarrow \infty$, where $\mathcal{G}_2 (\Delta \times \delta)= \textup{Tr} \left[V_{2\tau}E_{\tau}(\Delta)V_{2\tau}E_{\tau}(\delta) -V_2E(\Delta)V_2E(\delta)\right]$ and
$\mathcal{G}_2^{(n)}\left(\Delta \times \delta\right)$ is the same expression with second $V_2$-terms replaced by $V_2^{(n)}$. These  are complex measures on $\mathbb{R}^2$
and $Var\left(\mathcal{G}_2^{(n)} -\mathcal{G}_2\right)$ is the variation of $\left(\mathcal{G}_2^{(n)} -\mathcal{G}_2\right)$. Note that
\begin{equation*}
\begin{split} 
\hspace{-2cm} \textup{Tr} \left(V_{2\tau}E_{\tau}(d\lambda)V_{2\tau}^{(n)}E_{\tau}(d\mu)\right) = \textup{Tr} \left(V_{2\tau}E_{\tau}(d\lambda)\left[ \left(A_{\tau} - \textup{i}\right)^{-1} Y^{(n)} - Y^{(n)} \left(A_{\tau} - \textup{i}\right)^{-1} \right]E_{\tau}(d\mu)\right]\\
& \hspace{-10.8cm} =  ~\frac{-(\lambda -\mu)}{(\lambda -\textup{i})(\mu - \textup{i})} ~\textup{Tr} \left(V_{2\tau}E_{\tau}(d\lambda)Y^{(n)}E_{\tau}(d\mu)\right)
\end{split}
\end{equation*}
and since $\int_{-\infty}^{\infty} |t\hat{\phi}(t)|dt < \infty$ and  the other functions are bounded, the right hand side expression in 
\eqref{eq: thirdmaineq} is equal to 
\begin{equation}\label{eq: fourthmaineq}
\begin{split} 
\int_0^1ds \int_0^sd\tau \lim_{n\rightarrow \infty} \int_{-\infty}^{\infty} \textup{i}^2 ~t \hat{\phi}(t)dt  \int_{a}^{\infty}\int_{a}^{\infty} \frac{\textup{e}^{\textup{i} t\lambda} - \textup{e}^{\textup{i} t\mu}}{\textup{i}(\lambda-\mu)} \left[\frac{-(\lambda -\mu)}{(\lambda -\textup{i})(\mu - \textup{i})}\right]~\textup{Tr} \{V_{2\tau}E_{\tau}(d\lambda)Y^{(n)}E_{\tau}(d\mu) \\
& \hspace{-3.5cm} -V_2E(d\lambda)Y_0^{(n)}E(d\mu)\}\\
& \hspace{-17cm} = \int_0^1ds \int_0^sd\tau \lim_{n\rightarrow \infty} \int_{-\infty}^{\infty}-\textup{i} t \hat{\phi}(t)dt  \int_{a}^{\infty}\int_{a}^{\infty} \left[\textup{e}^{\textup{i} t\lambda} - \textup{e}^{\textup{i} t\mu}\right] \textup{Tr} \{V_{2\tau}E_{\tau}(d\lambda)\left(A_{\tau}-\textup{i}\right)^{-1}Y^{(n)}\left(A_{\tau}-\textup{i}\right)^{-1}E_{\tau}(d\mu)\\
& \hspace{-6.5cm} -V_2E(d\lambda)\left(A-\textup{i}\right)^{-1}Y_0^{(n)}\left(A-\textup{i}\right)^{-1}E(d\mu)\}\\
& \hspace{-17cm} = \int_0^1ds \int_0^sd\tau \lim_{n\rightarrow \infty} \int_{-\infty}^{\infty}-\textup{i} t \hat{\phi}(t)dt  \int_{a}^{\infty} ~\textup{e}^{\textup{i} t\lambda} ~\textup{Tr} \left(V_{2\tau}\left[E_{\tau}(d\lambda),\tilde{Y}^{(n)}\right] - V_2\left[E(d\lambda),\tilde{Y_0}^{(n)}\right]\right),
\end{split}
\end{equation}
where $\tilde{Y}^{(n)} = \left(A_{\tau}-\textup{i}\right)^{-1}Y^{(n)}\left(A_{\tau}-\textup{i}\right)^{-1}$ and $\tilde{Y_0}^{(n)} = \left(A-\textup{i}\right)^{-1}Y_0^{(n)}\left(A-\textup{i}\right)^{-1}$.
Again by applying Fubini's theorem the right hand side expression in \eqref{eq: fourthmaineq} is equal to 
$$ \int_0^1ds \int_0^sd\tau \lim_{n\rightarrow \infty} ~\int_{a}^{\infty} -\phi'(\lambda) ~\textup{Tr} \left(V_{2\tau}\left[E_{\tau}(d\lambda),\tilde{Y}^{(n)}\right] - V_2\left[E(d\lambda),\tilde{Y_0}^{(n)}\right]\right),$$
and by integrating by-parts twice and on observing that the boundary term vanishes, this  reduces to
\begin{equation*}
\begin{split} 
\int_0^1ds \int_0^sd\tau \lim_{n\rightarrow \infty} ~ -\{\phi'(\lambda) ~\textup{Tr} (V_{2\tau}[E_{\tau}(\lambda),\tilde{Y}^{(n)}] - V_2 [E(\lambda),\tilde{Y_0}^{(n)}])\mid_{\lambda =a}^{\infty}\\
& \hspace{-8cm} - \int_{a}^{\infty} -\phi''(\lambda) ~\textup{Tr} \left(V_{2\tau}\left[E_{\tau}(\lambda),\tilde{Y}^{(n)}\right] - V_2\left[E(\lambda),\tilde{Y_0}^{(n)}\right]\right) d\lambda\}\\
& \hspace{-13cm} = \int_0^1ds \int_0^sd\tau \lim_{n\rightarrow \infty} ~\int_{a}^{\infty} \phi''(\lambda) ~\textup{Tr} \left(V_{2\tau}\left[E_{\tau}(\lambda),\tilde{Y}^{(n)}\right] - V_2\left[E(\lambda),\tilde{Y_0}^{(n)}\right]\right) d\lambda\\
& \hspace{-13cm} = \int_0^1ds \int_0^sd\tau \lim_{n\rightarrow \infty} ~\{\phi''(\lambda) \int_a^{\lambda} \textup{Tr} (V_{2\tau}[E_{\tau}(\mu),\tilde{Y}^{(n)}] - V_2[E(\mu),\tilde{Y_0}^{(n)}]) ~d\mu \mid_{\lambda = a}^{\infty}\\
& \hspace{-9cm} - \int_{a}^{\infty} \phi'''(\lambda) \left( \int_a^{\lambda} ~\textup{Tr} \left(V_{2\tau}\left[E_{\tau}(\mu),\tilde{Y}^{(n)}\right] - V_2\left[E(\mu),\tilde{Y_0}^{(n)}\right]\right) d\mu \right) d\lambda\}\\
& \hspace{-13cm} = \int_0^1ds \int_0^sd\tau \lim_{n\rightarrow \infty} ~\int_{a}^{\infty} \phi'''(\lambda) \left( \int_a^{\lambda} ~\textup{Tr} \left(V_{2}\left[E(\mu),\tilde{Y_0}^{(n)}\right] - V_{2\tau}\left[E_{\tau}(\mu),\tilde{Y}^{(n)}\right]\right) d\mu \right) d\lambda\\
\end{split}
\end{equation*}
\begin{equation}\label{eq: seceta2eq}
\hspace{-9cm} = \int_0^1ds \int_0^sd\tau \lim_{n\rightarrow \infty} ~\int_{a}^{\infty} \phi'''(\lambda) ~\eta_{2\tau}^{(n)}(\lambda)~ d\lambda, 
 \end{equation}
$$\hspace{-5cm} \quad \text{where} \quad \eta_{2\tau}^{(n)}(\lambda) =  \int_a^{\lambda} ~\textup{Tr} \left(V_{2}\left[E(\mu),\tilde{Y_0}^{(n)}\right] - V_{2\tau}\left[E_{\tau}(\mu),\tilde{Y}^{(n)}\right]\right) d\mu. $$
Here it is worth observing that the hypothesis that $A$ is bounded below is used for the first time here, only for performing the second integration-by-parts.
Combining \eqref{eq: firsteta1eq} and \eqref{eq: seceta2eq}, we conclude that 
\begin{equation}\label{eq: bfrlimit}
\begin{split}
\textup{Tr}\left[\phi(A+V) - \phi(A) -\left[D^{(1)}\phi(A)\right](V) - \frac{1}{2}\left[D^{(2)}\phi(A)\right](V,V)\right] \\
& \hspace{-7cm} = \int_0^1ds \int_0^sd\tau \lim_{n\rightarrow \infty} ~\int_{a}^{\infty} \phi'''(\lambda) ~\eta_{\tau}^{(n)}(\lambda)~ d\lambda,
\end{split}
\end{equation}
where ~$\eta_{\tau}^{(n)}(\lambda) = \eta_{1\tau}(\lambda) + \eta_{2\tau}^{(n)}(\lambda)$. We claim that $\{\eta_{\tau}^{(n)}\}$
is a cauchy sequence in 

\hspace{-0.8cm} $ L^1\left(\mathbb{R},\frac{d\lambda}{(1+\lambda^2)^{1+\epsilon}}\right)$~ ($\epsilon > 0$) and we follow the idea from \cite{gesztesykop}. First note that  

\hspace{-0.8cm} $L^{\infty}(\mathbb{R},d\lambda) = L^{\infty}(\mathbb{R},\psi(\lambda)d\lambda)$ [~where $\psi(\lambda) = \frac{1}{(1+\lambda^2)^{1+\epsilon}}$~] since the two measures are 
equivalent. Next, let $f\in L^{\infty}(\mathbb{R},d\lambda)$ and define
$$\hspace{-0.5cm}  g(\lambda) = \int_{\lambda}^{\infty} f(t) \psi(t) dt \quad \text{for} \quad \lambda \in \mathbb{R}, \quad \text{then} \quad g \quad \text{is absolutely continuous with} \quad $$
$g'(\lambda) = -f(\lambda)\psi(\lambda)$ a.e. and that $|g(\lambda)|\leq \quad \text{Const.} \quad \frac{1}{(1+\lambda^2)^{\frac{1}{2}+\epsilon'}}$ ~~(for some $\epsilon' >0)$ for $\lambda \rightarrow \infty$ and bounded.
Next consider the expression 
\begin{equation*}
\begin{split} 
\int_{-\infty}^{\infty} f(\lambda)\psi(\lambda)\left[\eta_{\tau}^{(n)}(\lambda) - \eta_{\tau}^{(m)}(\lambda) \right] d\lambda = \int_{a}^{\infty} f(\lambda)\psi(\lambda)\left[\eta_{2\tau}^{(n)}(\lambda) - \eta_{2\tau}^{(m)}(\lambda) \right] d\lambda\\
& \hspace{-13cm} = \int_{a}^{\infty} -g'(\lambda)~ d\lambda \left(\int_a^{\lambda} ~\textup{Tr} \left(V_{2}\left[E(\mu),\tilde{Y_0}^{(n)} -\tilde{Y_0}^{(m)}\right] - V_{2\tau}\left[E_{\tau}(\mu),\tilde{Y}^{(n)} -\tilde{Y}^{(m)}\right]\right) d\mu \right),
\end{split}
\end{equation*}
which on integration by-parts and on observing that the boundary terms vanishes, leads to 
\begin{equation}\label{eq: intbyparteq}
\begin{split} 
\int_{a}^{\infty} g(\lambda) \textup{Tr} \left(V_{2}\left[E(\lambda),\tilde{Y_0}^{(n)} -\tilde{Y_0}^{(m)}\right] - V_{2\tau}\left[E_{\tau}(\lambda),\tilde{Y}^{(n)} -\tilde{Y}^{(m)}\right]\right)d\lambda.
\end{split}
\end{equation}
Define
$$ h(\lambda) = \int_a^{\lambda} g(t)dt \quad \text{for} \quad \lambda \in [a,\infty), \quad \text{then} \quad h \quad \text{is bounded, differentiable } \quad [a,\infty) $$
with $h'(\lambda) = g(\lambda) ~\forall ~\lambda \in [a,\infty)$. 
Hence by integrating by-parts and  observing that the boundary term vanishes, the right hand side expression in \eqref{eq: intbyparteq} is equal to
\begin{equation}\label{eq: combunbb1}
\begin{split} 
\int_{a}^{\infty} h'(\lambda) \textup{Tr} \left(V_{2}\left[E(\lambda),\tilde{Y_0}^{(n)} -\tilde{Y_0}^{(m)}\right] - V_{2\tau}\left[E_{\tau}(\lambda),\tilde{Y}^{(n)} -\tilde{Y}^{(m)}\right]\right)d\lambda\\
& \hspace{-11.5cm} = ~h(\lambda) ~\textup{Tr} \left(V_{2}\left[E(\lambda),\tilde{Y_0}^{(n)} -\tilde{Y_0}^{(m)}\right] - V_{2\tau}\left[E_{\tau}(\lambda),\tilde{Y}^{(n)} -\tilde{Y}^{(m)}\right]\right) \mid_{\lambda = a}^{\infty}\\
& \hspace{-9cm} - \int_{a}^{\infty} h(\lambda) \textup{Tr} \left(V_{2}\left[E(d\lambda),\tilde{Y_0}^{(n)} -\tilde{Y_0}^{(m)}\right] - V_{2\tau}\left[E_{\tau}(d\lambda),\tilde{Y}^{(n)} -\tilde{Y}^{(m)}\right]\right)\\
& \hspace{-11.5cm} = \int_{a}^{\infty} h(\lambda) ~\textup{Tr} \left(V_{2\tau}\left[E_{\tau}(d\lambda),\tilde{Y}^{(n)} -\tilde{Y}^{(m)}\right] - V_{2}\left[E(d\lambda),\tilde{Y_0}^{(n)} -\tilde{Y_0}^{(m)}\right]\right)\\
& \hspace{-11.5cm} = ~\textup{Tr} \left(V_{2\tau}\left[h(A_{\tau}),\tilde{Y}^{(n)} -\tilde{Y}^{(m)}\right] - V_{2}\left[h(A),\tilde{Y_0}^{(n)} -\tilde{Y_0}^{(m)}\right]\right).
\end{split}
\end{equation}
But on the other hand,
\begin{equation*}
\begin{split}
\left[h(A),\tilde{Y_0}^{(n)}\right] = \int_a^{\infty}\int_a^{\infty}[h(\lambda)-h(\mu)]E(d\lambda)\tilde{Y_0}^{(n)}E(d\mu) \\
&\hspace{-7.5cm}   = \int_a^{\infty}\int_a^{\infty}[h(\lambda)-h(\mu)](\lambda-\textup{i})^{-1} (\mu -\textup{i})^{-1}E(d\lambda)Y_0^{(n)}E(d\mu)\\ 
& \hspace{-7.5cm}  = \int_a^{\infty}\int_a^{\infty}\frac{h(\lambda)-h(\mu)}{(\lambda-\textup{i})^{-1} -(\mu-\textup{i})^{-1}}(\lambda-\textup{i})^{-1} (\mu -\textup{i})^{-1} E(d\lambda)V_2^{(n)}E(d\mu)
\end{split}
\end{equation*}
and hence
$$\left[h(A),\tilde{Y_0}^{(n)}\right] =  - \int_a^{\infty}\int_a^{\infty}\frac{h(\lambda)-h(\mu)}{\lambda -\mu} ~E(d\lambda)V_2^{(n)}E(d\mu). $$
Therefore
\begin{equation}\label{eq: combunbb2} 
\textup{Tr}\left(V_2\left[h(A),\tilde{Y_0}^{(n)}-\tilde{Y_0}^{(n)}\right] \right) = - \int_a^{\infty}\int_a^{\infty}\frac{h(\lambda)-h(\mu)}{\lambda -\mu} ~\textup{Tr}\left(V_2E(d\lambda)\left[V_2^{(n)}-V_2^{(m)}\right]E(d\mu)\right)
\end{equation}
and hence as in Birman-Solomyak (\cite{birmansolomyak},\cite{birsolomyak})  and  in \cite{chttosinha} ,
$$\left| \textup{Tr}\left(V_2\left[h(A),\tilde{Y_0}^{(n)}-\tilde{Y_0}^{(n)}\right] \right) \right| \leq \|f\|_{\infty} ~\|\psi\|_{L^1} ~\|V\|_2 \left\| \left[V_2^{(n)}-V_2^{(m)}\right] \right\|_2$$
and hence 
$$\frac{\left|\int_{-\infty}^{\infty} f(\lambda)\psi(\lambda)\left[\eta_{\tau}^{(n)}(\lambda) - \eta_{\tau}^{(m)}(\lambda) \right] d\lambda \right|}{\|f\|_{\infty}} \leq \|\psi\|_{L^1}~\|V\|_2~\left( \left\| \left[V_2^{(n)}-V_2^{(m)}\right] \right\|_2 + \left\| \left[V_{2\tau}^{(n)}-V_{2\tau}^{(m)}\right] \right\|_2\right)$$
i.e. $$ \| \eta_{\tau}^{(n)} - \eta_{\tau}^{(m)}\|_{L^1(\mathbb{R},\psi(\lambda)d\lambda)} \leq \|\psi\|_{L^1}~\|V\|_2~\left( \left\| \left[V_2^{(n)}-V_2^{(m)}\right] \right\|_2 + \left\| \left[V_{2\tau}^{(n)}-V_{2\tau}^{(m)}\right] \right\|_2\right),$$
which converges to $0$ as $m,n \longrightarrow \infty$ and $\forall ~\tau \in [0,1]$. A similar computation also shows that $\| \eta_{\tau}^{(n)}\|_{L^1(\mathbb{R},\psi(\lambda)d\lambda)} \leq 2~\|\psi\|_{L^1} ~\|V\|_2^2,$
independent of $\tau$ and $n$. Therefore $\{\int_0^1ds\int_0^sd\tau ~\eta_{\tau}^{(n)}(\lambda) \equiv \eta^{(n)}(\lambda)\}$ is also cauchy in $L^1(\mathbb{R},\psi(\lambda)d\lambda)$
and thus there exists a function $\eta \in L^1(\mathbb{R},\psi(\lambda)d\lambda)$ such that $\|\eta^{(n)} - \eta\|_{L^1(\mathbb{R},\psi(\lambda)d\lambda)} \longrightarrow 0$ as $n\longrightarrow \infty,$
by the bounded convergence theorem and hence also $\|\eta\|_{L^1(\mathbb{R},\psi(\lambda)d\lambda)} \leq \|\psi\|_{L^1} \|V\|_2^2.$ Therefore by using the Dominated Convergence theorem as well as Fubini's theorem, from \eqref{eq: bfrlimit}
we conclude that
$$\textup{Tr}\left[\phi(A+V) - \phi(A) -\left[D^{(1)}\phi(A)\right](V) - \frac{1}{2}\left[D^{(2)}\phi(A)\right](V,V)\right] = \int_{-\infty}^{\infty} \phi'''(\lambda)\eta(\lambda)d\lambda.$$
~~~~~~~~~~~~~~~~~~~~~~~~~~~~~~~~~~~~~~~~~~~~~~~~~~~~~~~~~~~~~~~~~~~~~~~~~~~~~~~~~~~~~~~~~~~~~~~~~~~~~~~~~~~~~~~~~~~~~~~~~~~~~~~~~~~~~\end{proof}
The proof of the uniqueness and the real-valued nature of $\eta$ is postponed till after the corollary \ref{cor: 1}.
\begin{crlre}\label{cor: 1}
Let $A$ be an unbounded self-adjoint operator in a Hilbert space $\mathcal{H}$ with $\sigma(A) \subseteq [b,\infty)$ for some $b\in \mathbb{R}$ and $V$ be a self-adjoint operator such that $V\in \mathcal{B}_2(\mathcal{H})$. 
Then the function $\eta \in L^1(\mathbb{R},\psi(\lambda)d\lambda)$ obtained as in theorem \ref{thm: mainunbthm} satisfies the following equation
\begin{equation}\label{eq: cor1eq}
\begin{split}
\int_{-\infty}^{\infty} f(\lambda) \psi(\lambda) \eta(\lambda)d\lambda \\
& \hspace{-4cm} = \int_0^1ds\int_0^sd\tau \int_a^{\infty}\int_a^{\infty}\frac{h(\lambda)-h(\mu)}{\lambda -\mu} ~\textup{Tr}\left[VE(d\lambda)VE(d\mu) - VE_{\tau}(d\lambda)VE_{\tau}(d\mu)\right],
\end{split}
\end{equation}
where $f(\lambda),~ g(\lambda), ~h(\lambda)$ ~and $\psi(\lambda)$ are as in the proof of the theorem \ref{thm: mainunbthm}.
\end{crlre}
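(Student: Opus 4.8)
The plan is to follow the pattern of the corollary after Theorem \ref{thm: mainbddthm} in the bounded case, reorganizing the quantities $\eta_{1\tau}$ and $\eta_{2\tau}^{(n)}$ already constructed inside the proof of Theorem \ref{thm: mainunbthm}, but now exploiting that $g(\infty)=0$ and $E(a)=E_\tau(a)=0$ to annihilate every boundary term. First I would recall from that proof that $\eta^{(n)}(\lambda)=\int_0^1 ds\int_0^s d\tau\,[\eta_{1\tau}(\lambda)+\eta_{2\tau}^{(n)}(\lambda)]$ with $\eta_{1\tau}(\lambda)=\textup{Tr}[V_1^2E(\lambda)-V_{1\tau}^2E_\tau(\lambda)]$, and that $\eta^{(n)}\to\eta$ in $L^1(\mathbb{R},\psi\,d\lambda)$. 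Since $f\in L^\infty$, the left side $\int f\psi\,\eta^{(n)}$ converges to $\int f\psi\,\eta$, so it suffices to evaluate $\int_{-\infty}^\infty f(\lambda)\psi(\lambda)\eta^{(n)}(\lambda)\,d\lambda$ and pass to the limit; by Fubini this reduces to computing $\int_a^\infty f\psi\,[\eta_{1\tau}+\eta_{2\tau}^{(n)}]\,d\lambda$ for each fixed $\tau$.

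For the $\eta_{1\tau}$ piece I would substitute $f\psi=-g'$, integrate by parts (the boundary contributions vanishing because $g(\infty)=0$ and $E(a)=E_\tau(a)=0$), and use $g=h'$ to reach $\textup{Tr}[V_1^2h'(A)-V_{1\tau}^2h'(A_\tau)]$. Because $V_1\in\textup{Ker}(\mathcal{M}_A)$ commutes with $E(\cdot)$ by Lemma \ref{lmma: decomp}, the off-diagonal part of the double spectral integral collapses and $\textup{Tr}[V_1^2h'(A)]=\int_a^\infty\int_a^\infty\frac{h(\lambda)-h(\mu)}{\lambda-\mu}\textup{Tr}[V_1E(d\lambda)V_1E(d\mu)]$, the diagonal value of the divided difference being $h'$. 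For the $\eta_{2\tau}^{(n)}$ piece I would repeat verbatim the integration-by-parts chain \eqref{eq: intbyparteq}--\eqref{eq: combunbb2} (with $\tilde{Y}^{(n)}$ in place of $\tilde{Y}^{(n)}-\tilde{Y}^{(m)}$), which converts it into $\int\int\frac{h(\lambda)-h(\mu)}{\lambda-\mu}\textup{Tr}[V_2E(d\lambda)V_2^{(n)}E(d\mu)-V_{2\tau}E_\tau(d\lambda)V_{2\tau}^{(n)}E_\tau(d\mu)]$ via the resolvent representation of $V_2^{(n)}$.

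Adding the two pieces, I then want to recombine the genuine left factor $V=V_1\oplus V_2$ with the right factor $V_1\oplus V_2^{(n)}$, i.e.\ to show the sum equals $\int\int\frac{h(\lambda)-h(\mu)}{\lambda-\mu}\textup{Tr}[VE(d\lambda)(V_1\oplus V_2^{(n)})E(d\mu)-VE_\tau(d\lambda)(V_{1\tau}\oplus V_{2\tau}^{(n)})E_\tau(d\mu)]$. This is the crux, and it requires the two cross terms to integrate to zero against the kernel $K(\lambda,\mu)=\frac{h(\lambda)-h(\mu)}{\lambda-\mu}$. The term $\int\int K\,\textup{Tr}[V_1E(d\lambda)V_2^{(n)}E(d\mu)]$ equals $-\textup{Tr}(V_1[h(A),\tilde{Y_0}^{(n)}])$ because $V_2^{(n)}$ is a commutator, and this vanishes by cyclicity together with $[V_1,h(A)]=0$. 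The term $\int\int K\,\textup{Tr}[V_2E(d\lambda)V_1E(d\mu)]$ is, by cyclicity of the trace and the symmetry $K(\lambda,\mu)=K(\mu,\lambda)$, equal to $\int\int K\,\textup{Tr}[V_1E(d\lambda)V_2E(d\mu)]$, which is the $\|\cdot\|_2$-limit of the previous (zero) expression and vanishes by the Birman--Solomyak bound $|\int\int K\,\textup{Tr}[V_1E(d\lambda)WE(d\mu)]|\le \|h\|_{\textup{Lip}}\|V_1\|_2\|W\|_2$ applied to $W=V_2-V_2^{(n)}$.

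Finally I would let $n\to\infty$: since $V_2^{(n)}\to V_2$ and $V_{2\tau}^{(n)}\to V_{2\tau}$ in $\|\cdot\|_2$, the right factors $V_1\oplus V_2^{(n)}$ and $V_{1\tau}\oplus V_{2\tau}^{(n)}$ converge to $V$, and the convergence of the double spectral integrals is controlled by the total-variation bound on the complex measures $\textup{Tr}[VE(\cdot)VE(\cdot)-VE_\tau(\cdot)VE_\tau(\cdot)]$ already used in Theorem \ref{thm: mainunbthm}, so the limit passes inside the $s$- and $\tau$-integrals by bounded convergence. This produces exactly \eqref{eq: cor1eq}. The main obstacle is this recombination step, namely the vanishing of the two cross terms and the justification of interchanging the $n\to\infty$ limit with the spectral integration; the remaining manipulations are a bookkeeping repetition of the integrations by parts already carried out in the theorem.
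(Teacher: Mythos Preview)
Your proposal is correct and follows essentially the same route as the paper: compute $\int f\psi\,\eta^{(n)}$ via Fubini, treat $\eta_{1\tau}$ and $\eta_{2\tau}^{(n)}$ separately by integration by parts (boundary terms vanishing because $g(\infty)=0$ and $E(a)=E_\tau(a)=0$), recombine into $\textup{Tr}[(V_1\oplus V_2)E(d\lambda)(V_1\oplus V_2^{(n)})E(d\mu)-\cdots]$, and pass to the limit using the variation bound on the spectral measures. In fact you are more explicit than the paper at the recombination step: the paper simply writes ``Combining \eqref{eq: cor1} and \eqref{eq: cor2}'' and jumps to \eqref{eq: cor3}, leaving the vanishing of the cross terms $\int\!\!\int K\,\textup{Tr}[V_1E(d\lambda)V_2^{(n)}E(d\mu)]$ and $\int\!\!\int K\,\textup{Tr}[V_2E(d\lambda)V_1E(d\mu)]$ implicit, whereas your commutator/cyclicity argument for the first and your symmetry-plus-limit argument for the second supply exactly what is needed.
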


\begin{proof}
By Fubini's theorem we have that 
\begin{equation*}
\begin{split} 
\int_{-\infty}^{\infty} f(\lambda)\psi(\lambda) \eta ^{(n)}(\lambda) d\lambda = \int_0^1ds \int_0^sd\tau \int_{a}^{\infty} f(\lambda)\psi(\lambda)  \left[ \eta_{1\tau} (\lambda) + \eta_{2\tau}^{(n)}(\lambda)\right]  d\lambda.
\end{split}
\end{equation*}
But 
\begin{equation*}
\begin{split} 
\int_{a}^{\infty} f(\lambda)\psi(\lambda) \eta_{1 \tau} (\lambda) d\lambda = \int_{a}^{\infty} -g'(\lambda)  \textup{Tr}\left[ V_1^2E(\lambda)-V_{1\tau}^2E_{\tau}(\lambda)\right] d\lambda,
\end{split}
\end{equation*}
which by integrating  by-parts and  observing that the boundary terms vanishes, leads to
\begin{equation*}
\begin{split} 
 \int_{a}^{\infty} g(\lambda)  \textup{Tr}\left[ V_1^2E(d\lambda)-V_{1\tau}^2E_{\tau}(d\lambda)\right] = \textup{Tr}\left[ V_1^2 h'(A)-V_{1\tau}^2 h'(A_{\tau})\right],\quad \text{which} \quad
\end{split}
\end{equation*}
by lemma \ref{lmma: decomp} $(ii)$ is equal to
\begin{equation}\label{eq: cor1}
\int_a^{\infty}\int_a^{\infty}\frac{h(\lambda)-h(\mu)}{\lambda -\mu} ~\textup{Tr}\left[V_1E(d\lambda)V_1E(d\mu) - V_{1\tau}E_{\tau}(d\lambda)V_{1\tau}E_{\tau}(d\mu)\right].
\end{equation}
Again by repeating the same calculations  as in the proof of the theorem \ref{thm: mainunbthm}, we conclude that
\begin{equation}\label{eq: cor2}
\begin{split} 
\int_{a}^{\infty} f(\lambda)\psi(\lambda) \eta_{2\tau}^{(n)}(\lambda)  d\lambda = \int_a^{\infty}\int_a^{\infty}\frac{h(\lambda)-h(\mu)}{\lambda -\mu} ~\textup{Tr}\left[V_2E(d\lambda)V_2^{(n)}E(d\mu) - V_{2\tau}E_{\tau}(d\lambda)V_{2\tau}^{(n)}E_{\tau}(d\mu)\right].
\end{split}
\end{equation}
Combining \eqref{eq: cor1} and \eqref{eq: cor2} we have,
\begin{equation}\label{eq: cor3}
\begin{split} 
\int_{-\infty}^{\infty} f(\lambda)\psi(\lambda) \eta^{(n)}(\lambda)  d\lambda \\
& \hspace{-4.2cm} = \int_0^1 ds\int_0^s d\tau\int_a^{\infty}\int_a^{\infty}\frac{h(\lambda)-h(\mu)}{\lambda -\mu} ~\textup{Tr} [(V_1 \oplus V_2) E(d\lambda)\left(V_1 \oplus V_2^{(n)}\right)E(d\mu) \\
& \hspace{4cm}  - (V_{1\tau} \oplus V_{2\tau})E_{\tau}(d\lambda)\left(V_{1\tau} \oplus V_{2\tau}^{(n)}\right) E_{\tau}(d\mu)].
\end{split}
\end{equation}
But by definition $V_{2}^{(n)}, V_{2\tau}^{(n)}$ converges to $V_2 , V_{2\tau}$ respectively in $\|.\|_2$ and we have already proved that $\eta^{(n)}$ converges to $\eta$ in $ L^1(\mathbb{R},\psi(\lambda)d\lambda)$
.Hence by taking limit on both sides of \eqref{eq: cor3} we get that 
\begin{equation}\label{eq: corendeq}
\begin{split} 
\int_{-\infty}^{\infty} f(\lambda)\psi(\lambda) \eta(\lambda)  d\lambda \\
& \hspace{-4cm} = \int_0^1 ds\int_0^s d\tau\int_a^{\infty}\int_a^{\infty}\frac{h(\lambda)-h(\mu)}{\lambda -\mu} ~\textup{Tr}\left[VE(d\lambda)VE(d\mu) - VE_{\tau}(d\lambda)V E_{\tau}(d\mu)\right],
\end{split}
\end{equation}
where we have used the fact that 
\vspace{0.1in}

\hspace{-0.8cm} $Var\left(\mathcal{G}_2^{(n)} -\mathcal{G}_2\right) \leq ~\|V\|_2 \left(\|V_{2\tau}^{(n)} - V_{2\tau}\|_2 + ~\|V_2 - V_2^{(n)}\| \right) \longrightarrow 0,$
 and that $ \|h\|_{\textup{Lip}}\leq \|g\|_{\infty} \leq \|f\|_{\infty} \|\psi\|_{L^1}.$
\vspace{0.2in}

~~~~~~~~~~~~~~~~~~~~~~~~~~~~~~~~~~~~~~~~~~~~~~~~~~~~~~~~~~~~~~~~~~~~~~~~~~~~~~~~~~~~~~~~~~~~~~~~~~~~~~~~~~~~~~~~~~~~~~~~~~~~~~~~~~~~~~~~~~\end{proof}
\hspace{-0.8cm} {\bf{\emph{ Proof of uniqueness and real-valued property of $\eta$:}}}
 For uniqueness in theorem \ref{thm: mainunbthm}, let us assume that there exists $\eta_1, \eta_2 \in L^1(\mathbb{R}, \psi(\lambda)d\lambda)$ such that 
$$\textup{Tr}\left[\phi(A+V) - \phi(A) -\left[D^{(1)}\phi(A)\right](V) - \frac{1}{2}\left[D^{(2)}\phi(A)\right](V,V)\right] = \int_{-\infty}^{\infty} \phi'''(\lambda)\eta_j(\lambda)d\lambda$$
for $j=1,2$. Then using corollary \ref{cor: 1} we conclude that
$$ \int_{\mathbb{R}} f(\lambda) \psi(\lambda) \eta_j(\lambda)d\lambda = \int_0^1ds\int_0^sd\tau \int_a^{\infty}\int_a^{\infty}\frac{h(\lambda)-h(\mu)}{\lambda -\mu} ~\textup{Tr}\left[VE(d\lambda)VE(d\mu) - VE_{\tau}(d\lambda)VE_{\tau}(d\mu)\right],$$
for $j=1,2$ and for all $f\in L^{\infty}(\mathbb{R})$. Hence 
\begin{equation}\label{eq: fstuniqeq}
\int_{\mathbb{R}} f(\lambda) ~\psi(\lambda) ~\eta(\lambda)~d\lambda = 0 ~~\forall ~~f~~\in L^{\infty}(\mathbb{R}),
\end{equation}
where  $\eta(\lambda) \equiv \eta_1(\lambda) -\eta_2(\lambda) \in L^1(\mathbb{R},\psi(\lambda)d\lambda).$ Since \eqref{eq: fstuniqeq} is true for all $f\in L^{\infty}(\mathbb{R}),$
in particular it is true for all real valued $f\in L^{\infty}(\mathbb{R})$ i.e.
\begin{equation}\label{eq: secuniqeq}
\int_{\mathbb{R}} f(\lambda) ~\psi(\lambda) ~\eta(\lambda)~d\lambda = 0 ~~\forall ~~\quad \text{real valued} \quad f~~\in L^{\infty}(\mathbb{R}).
\end{equation}
Let $\eta(\lambda) = \eta_{Rel}(\lambda) + \textup{i} ~\eta_{Img}(\lambda)$, where $\eta_{Rel}(\lambda) $ and $\eta_{Img}(\lambda) $ are real valued $L^1(\mathbb{R} ,\psi(\lambda)d\lambda)$-function.
Hence from \eqref{eq: secuniqeq} , we conclude that 
\begin{equation}\label{eq: thrduniqeq}
\int_{\mathbb{R}} f(\lambda) ~\psi(\lambda) ~\eta_{Rel}(\lambda)~d\lambda = 0 =  \int_{\mathbb{R}} f(\lambda) ~\psi(\lambda) ~\eta_{Img}(\lambda)~d\lambda~~\forall ~~\quad \text{real valued} \quad f~~\in L^{\infty}(\mathbb{R}).
\end{equation}
In particular if we consider $f(\lambda) = sgn~ \eta_{Rel}(\lambda)$, where $ sgn~ \eta_{Rel}(\lambda) = 0 ~\forall ~\lambda$ such that $\eta_{Rel}(\lambda) = 0$~;
$ sgn~ \eta_{Rel}(\lambda) = 1 ~\forall ~\lambda$ such that $\eta_{Rel}(\lambda) > 0$~; $ sgn~ \eta_{Rel}(\lambda) = -1 ~\forall ~\lambda$ such that $\eta_{Rel}(\lambda) < 0$~.
Then $f = sgn ~\eta_{Rel}\in L^{\infty}(\mathbb{R})$ and hence 
$$\int_{\mathbb{R}} \left|\eta_{Rel}(\lambda)\right| |\psi(\lambda)| ~d\lambda = \int_{\mathbb{R}} sgn~\eta_{Rel}(\lambda) ~ \eta_{Rel}(\lambda) ~\psi(\lambda) d\lambda = 0,$$
which implies that $\left|\eta_{Rel}(\lambda)\right| |\psi(\lambda)| = 0$ a.e. and hence $\eta_{Rel}(\lambda) = 0$ a.e.. Similarly by the same above argument we
conclude that $\eta_{Img}(\lambda) = 0$ a.e. and hence $\eta(\lambda) = 0$ a.e.. Therefore $\eta_1(\lambda) = \eta_2(\lambda)$a.e..
Again, since the right hand side of \eqref{eq: cor1eq} is real for all real valued $f\in L^{\infty}(\mathbb{R})$, by a similar argument as above, it follows 
that $\eta$ is real valued.
\vspace{0.2in}

\noindent\textbf{Acknowledgment:} The authors would like to thank Council of Scientific and Industrial Research (CSIR), Government of India 
for a research and Bhatnagar Fellowship respectively.

\end{document}